\newtheorem{theorem}{Theorem}[section]
\newtheorem{corollary}[theorem]{Corollary}
\newtheorem{lemma}[theorem]{Lemma}
\newtheorem{definition}[theorem]{Definition}
\newtheorem{remark}[theorem]{Remark}
\newtheorem{example}[theorem]{Example}
\newcommand{\p}{\mathcal{P}}
\newcommand{\h}{\hookrightarrow}
\newcommand{\hh}{\overset{\operatorname{\h}}}
\title{Primal-Proximity Spaces}
\author{Ahmad Al-Omari$^1$, Murad \"{O}zko\c{c}$^2$, Santanu Acharjee$^3$\\
$^1$Al al-Bayt University\\
Faculty of Sciences, Department of Mathematics\\
P.O. Box $130095$, Mafraq $25113$, Jordan\\
$^2$Mu\u{g}la S{\i}tk{\i} Ko\c{c}man University, Faculty of Science\\
Department of Mathematics  $48000$,
Mente\c{s}e-Mu\u{g}la, Turkey\\
$^3$Department of Mathematics, Guwahati-$781014$, Assam, India\\
e-mail: $^1$omarimutah1@yahoo.com, $^2$murad.ozkoc@mu.edu.tr\\
$^3$sacharjee326@gmail.com}
\date{}
\begin{document}

\maketitle
% -------------------------------------------------------------
% ----------------------------------------------------------------
\begin{abstract}
The main purpose of this paper is to introduce and study the primal-proximity spaces. Also, we define two new operators via primal proximity spaces and investigate some of their fundamental properties. In addition, we obtain a new topology, which is weaker than old one, via these new operators. Moreover, we not only discuss some of their properties but also enrich with some examples. 
\\

{\bf 2010 AMS Classifications:} 54E05, 54A05, 54A10, 54B99.\\

{\bf Keywords:} Primal, primal-proximity, Kuratowski closure operator, primal topological space
\end{abstract}
\maketitle

\section{Introduction}
In many areas of mathematics, topology plays  very crucial roles. Applications of many topological ideas, to solve  various problems of nature, have attracted  researchers of different branches of science and social sciences. Many new notions have been introduced in topology, which have enriched topology with several new areas of research. Some of the most important classical structures of topology are filters \cite{Wl}, ideals \cite{Ku}, and grills \cite{ch}. The definition of ideal was first introduced by Kuratowski \cite{Ku}. On the hand, the notion of grill was introduced in \cite{ch}. It is important to observe that the notion of ideal is the dual of filter, but ideal has helped researchers to introduce many new areas of topology viz. ideal topological space \cite{Ja},  $I$-proximity \cite{ktez}, etc. But to the best of our knowledge,  no literature was available on dual structure of grill prior to \cite {aoi}.\\

Recently, Acharjee et al. \cite{aoi}  introduced a new structure called `primal'. They obtain not only some fundamental properties related to primal but also some relationships between topological spaces and primal topological spaces. Primals \cite{aoi} come across as the dual of the notion of grills while the dual of filters are ideals. Later, Al-Omari et al. \cite{aao}  introduced several new operators in primal topological spaces using primal. On the other hand, the notion of proximity \cite{efr} is also an important notion in the area of topology as well. Several forms of this notion such as $I$-proximity \cite{ktez}, $\mu$-proximity \cite{mmd, yi}, quasi proximity \cite{ste}, and multiset proximity \cite{ktez2} have been studied by several researchers. Moreover, applications of proximity can be found in pattern recognition \cite{Pe}, region based theory of space \cite{ Di, Di1}, artificial intelligence \cite{ Du}, spatial analysis \cite{Bre}, etc.  One may refer to \cite{hosny1, hosny2, wj, b1, ktez1, ktez3, keyh, leader, lodato, m1, m2,  tiwari,   Na,  az, ta1, ta2, mn} and many others for proximity.\\

In section 3 of this paper, we introduce a new type of proximity  called primal-proximity. Also, we define point-primal proximity operator and investigate some of its fundamental properties in section 4. In addition, we prove that this operator is a Kuratowski closure operator under special condition. Moreover, we define one more operator via point-primal proximity operator. This operator come across as a Kuratowski closure operator without any condition. Furthermore, we give not only some relationships but also several examples. 

\section{Preliminaries}
In this section, we discuss some preliminary definitions which will be used in next sections.
\begin{definition} \cite{aoi}
Let $X$ be a non-emptyset.  A collection $\mathcal{P}\subseteq   2^X$  is called a primal on $X$ if it satisfies the following conditions:\\

(i) $X \notin \mathcal{P}$,\\

(ii) if $A\in \mathcal{P}$ and $B\subseteq A$, then $B\in \mathcal{P}$,\\

(iii) if $A\cap B\in \mathcal{P}$, then $A\in \mathcal{P}$ or $B\in \mathcal{P}$.
\end{definition}

\begin{corollary} \cite{aoi}
Let $X$ be a non-emptyset. A collection $\mathcal{P}\subseteq   2^X$  is a primal on $X$ if and only if it satisfies the following conditions:\\

(i) $X \notin \mathcal{P}$,\\

(ii) if $B\notin \mathcal{P}$ and $B\subseteq A$, then $A\notin \mathcal{P}$,\\

(iii) if $A\notin \mathcal{P}$ and $B\notin \mathcal{P},$ then $A\cap B\notin \mathcal{P}.$
\end{corollary}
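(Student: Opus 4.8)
The plan is to observe that the three conditions listed in the corollary are precisely the contrapositives of the three conditions in Definition 2.1, so that the asserted equivalence reduces to elementary propositional logic. Since condition (i) is identical in both formulations, only conditions (ii) and (iii) need attention, and each is settled by a one-line argument by contradiction in each direction.

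For the ``only if'' direction I would assume $\mathcal{P}$ is a primal in the sense of Definition 2.1 and verify the corollary's conditions. For (ii): suppose $B \notin \mathcal{P}$ and $B \subseteq A$; were $A \in \mathcal{P}$, then Definition 2.1(ii) would force $B \in \mathcal{P}$, a contradiction, so $A \notin \mathcal{P}$. For (iii): suppose $A \notin \mathcal{P}$ and $B \notin \mathcal{P}$; were $A \cap B \in \mathcal{P}$, then Definition 2.1(iii) would give $A \in \mathcal{P}$ or $B \in \mathcal{P}$, again a contradiction, so $A \cap B \notin \mathcal{P}$.

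For the ``if'' direction I would assume the corollary's three conditions and recover Definition 2.1. Condition (i) is immediate. For Definition 2.1(ii): suppose $A \in \mathcal{P}$ and $B \subseteq A$; were $B \notin \mathcal{P}$, then condition (ii) of the corollary, applied with this $B \subseteq A$, would yield $A \notin \mathcal{P}$, contradicting $A \in \mathcal{P}$, so $B \in \mathcal{P}$. For Definition 2.1(iii): suppose $A \cap B \in \mathcal{P}$; if both $A \notin \mathcal{P}$ and $B \notin \mathcal{P}$, then condition (iii) of the corollary would give $A \cap B \notin \mathcal{P}$, a contradiction, so $A \in \mathcal{P}$ or $B \in \mathcal{P}$.

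I do not expect any genuine obstacle here: the statement is a routine reformulation, and the only point calling for a little care is keeping the quantifiers and the direction of the implications straight when passing to contrapositives. No auxiliary results beyond Definition 2.1 are required.
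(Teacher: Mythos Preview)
Your proof is correct and is exactly the natural contrapositive argument one would expect. The paper does not actually supply a proof of this corollary; it is quoted verbatim from \cite{aoi} as a preliminary result, so there is no paper proof to compare against, but your reasoning is the standard one and would be accepted without reservation.
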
 

\begin{example}
\cite{aoi} Let $X$ be a non-emptyset. Then, $\p=\{A\subseteq X: |A^{c}|\geq\aleph_0\}$ is a primal on $X,$ where $\aleph_0$ is the lowest infinite cardinal number.
\end{example}

\begin{example}
\cite{aoi} Let $X$ be a non-emptyset. Then, $\p=\{A\subseteq X: |A^{c}|>\aleph_0\}$ is a primal on $X,$ where $\aleph_0$ is the lowest infinite cardinal number.
\end{example}

%Let $\delta$ be a binary relation on a non-emptyset $X$.
\begin{definition}
    \cite{efr} A binary relation $\delta$ on $2^{X}$ is called an (Efremovi$\check{c}$) proximity on $X$ if $\delta$ satisfies the following conditions:
\begin{enumerate}
  \item $A\delta B \Rightarrow B\delta A$,
  \item $A\delta (B\cup C) \Leftrightarrow A\delta B$ or $A\delta C$,
  \item $A\delta B \Rightarrow A\neq \emptyset$ and $B\neq\emptyset$,
   \item $A\cap B\neq \emptyset \Rightarrow A\delta B$,
    \item if $A\centernot\delta B$,  then there exists $C, D\subseteq X $ such that $A\centernot\delta C^{c}$, $D^{c}\centernot\delta B$ and $C\cap D=\emptyset$.
\end{enumerate}

\end{definition}
A proximity space is a pair $(X, \delta)$ consisting of a set $X$ and a proximity relation on $X$. We shall
write $A\delta B$ if the sets $A, B \subseteq X$ are $\delta$-related, otherwise we shall write $A\centernot\delta B$. Throughout this paper, the space $(X, \delta, \p)$ means an $Ef$-proximity space $(X, \delta)$ with a primal $\p$ on $X$. Now, we define the following definition which will be used in section 5.

\begin{definition}
 In a space $(X, \delta, \p)$, we say that a subset $A$ of $X$ is
locally in $\p$ at $x \in X$ if there exists a $\delta$-neighborhood $U$ of $x$ such that
$U^{c} \cup A^{c}\notin \p$. Also for a subset $A$ of $X$, the primal  local  function of $A$ with respect
to $\delta$ and $\p$, denoted by $A^{\diamond}(\delta, \p)$, simply $A^{\diamond}(\p)$
 or $A^{\diamond}$, is the set $\bigcup\{x \in X : A$
is not primal locally  in $\p$ at $x\}$ i.e., $A^{\diamond}
(\delta, \p) = \bigcup\{x \in X : U^{c} \cup A^{c} \notin \p$, for every
$\delta$-neighborhood $U$ of $x\}$.
\end{definition}

%\begin{example}
%Let $X$ be a non-emptyset. Then, $\p=\{A\subseteq X: A^{c}$ is infinite set$ \}$ is a primal on $X$.
%\end{example}

\section{Primal-Proximity Spaces}
In this section, we introduce the notion of primal-proximity on $X$ and investigate some of its fundamental properties.
\begin{definition}~\label{d1}
A binary relation $\hookrightarrow$ on $2^{X}$ with a primal $\mathcal{P}$ on a non-emptyset $X$ is called a  primal-proximity on $X$ if $\hookrightarrow$ satisfies the following conditions:
\begin{enumerate}
  \item[(1)] if $A\hookrightarrow B,$ then $B\hookrightarrow A$;
    \item[(2)] $A\hookrightarrow (B\cup C)$ if and only if
  $ A\hookrightarrow B$ or $A\hookrightarrow C$;
    \item[(3)] if $A^{c}\notin \p,$ then $A\centernot \h B $ for all  $B\subseteq X$;
    \item[(4)] if $(A\cap B)^{c}\in \p,$ then $ A\hookrightarrow B$;
      \item[(5)] if $A\centernot\hookrightarrow B,$ then there exist $  C, D\subseteq X $ such that $A\centernot\hookrightarrow C^{c}$, $D^{c}\centernot\hookrightarrow B$ and $(C\cap D)^{c}\notin \p$.
\end{enumerate}
\end{definition}

\begin{definition}
A primal-proximity space is a pair $(X,\hookrightarrow)$ consisting of a set $X$ and primal-proximity relation on a non-emptyset $X.$ We  write $A\hookrightarrow B$ if the sets $A,B\subseteq X$ are $\hookrightarrow$-related, otherwise we  write $A\centernot\hookrightarrow B.$
\end{definition}

\begin{remark}
Let $X$ be a non-emptyset and $A\subseteq X$ such that $\p=2^{X}\setminus \{X\}.$
\begin{enumerate}
\item  If $x\in A$, then $\{x\}\h A.$
\item If $A\centernot \h B $, then $A\cap B=\emptyset$.
\end{enumerate}
Suppose $A\cap B\neq\emptyset$. Then, there exists at least one point in $X$ such that $a \in A \cap B.$ Therefore, $(A\cap B)^c \neq X$. Hence, $(A\cap B)^{c} \in \p$ since $\p=2^X\setminus \{X\}.$ It follows that $A \h B$, which is impossible. Therefore, $A\cap B=\emptyset$.
\end{remark}

\begin{corollary}~\label{c1}
Let $\hookrightarrow$ be a primal-proximity on a non-emptyset $X.$ Then, the followings hold:
\begin{enumerate}
   \item[(1)] if $B\centernot \hookrightarrow A,$ then $A\centernot\hookrightarrow B,$

 \item[(2)] $ A\centernot\hookrightarrow (B\cup C)$ if and only if
  $ A\centernot\hookrightarrow B$ and $A\centernot\hookrightarrow C$,

 \item[(3)] if there exists $B\subseteq X$ such that $A \hookrightarrow B,$ then $A^{c}\in \p,$

 \item[(4)] if $ A\centernot \hookrightarrow B,$ then $(A\cap B)^{c}\notin \p ,$

 \item[(5)] if $A\centernot\hookrightarrow B,$ then there exist $  C, D\subseteq X $ such that $A\centernot\hookrightarrow C^{c}$, $D^{c}\centernot\hookrightarrow B$ and $(C\cap D)^{c}\notin \p$.
\end{enumerate}
\end{corollary}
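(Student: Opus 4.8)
The plan is to derive each of the five assertions directly from the correspondingly numbered clause of Definition~\ref{d1}, using only elementary propositional logic — contraposition and De Morgan's laws. Since a primal-proximity relation is a \emph{fixed} binary relation on $2^X$, the statements ``$A\h B$'' and ``$A\centernot\h B$'' are genuine negations of one another, so no subtlety arises in passing between them; the whole corollary is just the ``negative form'' of the axioms, recorded in one place for later convenience.

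For (1), I would simply contrapose clause (1) of Definition~\ref{d1}: from $A\h B\Rightarrow B\h A$ one obtains $\lnot(B\h A)\Rightarrow\lnot(A\h B)$, which is exactly $B\centernot\h A\Rightarrow A\centernot\h B$. For (2), clause (2) reads $A\h(B\cup C)\Leftrightarrow(A\h B \text{ or } A\h C)$; negating both sides of this biconditional and applying De Morgan to the disjunction on the right yields $A\centernot\h(B\cup C)\Leftrightarrow(A\centernot\h B \text{ and } A\centernot\h C)$. For (3), I would contrapose clause (3): its statement is $A^{c}\notin\p\Rightarrow(\forall B\subseteq X)(A\centernot\h B)$, whose contrapositive is precisely $(\exists B\subseteq X)(A\h B)\Rightarrow A^{c}\in\p$. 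For (4), contraposing clause (4) turns $(A\cap B)^{c}\in\p\Rightarrow A\h B$ into $A\centernot\h B\Rightarrow(A\cap B)^{c}\notin\p$. Finally, (5) is verbatim clause (5) of Definition~\ref{d1}, so there is nothing to prove.

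There is essentially no obstacle here: every step is a one-line logical manipulation, and the only point requiring a little care is the bookkeeping in (2), where De Morgan must be applied correctly so that the ``or'' becomes ``and'' after negation. Accordingly I would present the argument clause by clause, each in a single sentence, mirroring the numbering of Definition~\ref{d1}.
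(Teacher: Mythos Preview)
Your proposal is correct and matches the paper's approach: the paper states Corollary~\ref{c1} without proof, treating it as the immediate ``negative form'' of Definition~\ref{d1}, and your clause-by-clause contraposition is exactly the argument implicitly intended.
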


\begin{example}
Let $\p$ be a primal on a non-emptyset $X$ and $A, B\subseteq X$. We define    a binary relation $\hookrightarrow$ on $2^{X}$
 as:
$$A\hookrightarrow B \Leftrightarrow  A^{c}, B^{c} \in\p.$$

Then, $\hookrightarrow$
is a primal-proximity relation. Indeed, one easily finds that $\hookrightarrow$ satisfies conditions, (1) to (4). We are to check that $\hookrightarrow$ also satisfies condition (5). Let $A \centernot\hookrightarrow B$. It follows that $A^{c} \notin \p$ or
$B^{c} \notin \p$. If $A^{c} \notin \p$, by taking $C = A^{c}$ and $D = A$
 have the required properties. If $B^{c} \notin \p$, by taking
$C = B$ and $D = B^{c}$.
\end{example}

\begin{example}~\label{exm1}
Let $\p$ be a primal on a non-emptyset $X$ and $A, B \subseteq X.$ We define a binary relation $\h$ on $2^{X}$
as:
$$A\hookrightarrow B \Leftrightarrow  (A\cap B)^{c} \in\p.$$

Then,  $\h$
is a primal-proximity on $X$. It follows directly from the definition that $\hookrightarrow$
satisfies conditions (1) to (4). To prove that $\hookrightarrow$ satisfies condition (5), let $A \centernot\hookrightarrow B$. It
follows that $(A \cap B)^{c} \notin \p$. If we take $C := B^{c}$
 and $D := B$,  the results proves.
\end{example}
\begin{example}
Let $(X,\tau,\mathcal{P})$ be a primal topological space such that $\mathcal{P}=2^X\setminus \{X\}.$ Let   $(X,\tau)$ be a normal space and $A, B\subseteq X$.  Define a binary relation
 $\h$ on $2^{X}$ as:
$$A\h B \Leftrightarrow  \left(cl(A)\cap cl(B)\right)^{c} \in\p,$$ where the  closure is taken with respect to $\tau$.
Then, the binary relation $\h$ is a primal-proximity on $X.$
\end{example}

\begin{proof}
\begin{enumerate}
  \item[(1)] $A\h B \Leftrightarrow  \left(cl(A)\cap cl(B)\right)^{c} \in\p \Leftrightarrow \left(cl(B)\cap cl(A)\right)^{c} \in\p \Leftrightarrow B\h A$.
   \item[(2)] Let $A,B,C\subseteq X.$ \\
  $\begin{array}{rcl} A\hookrightarrow (B\cup C) & \Leftrightarrow & \left(cl(A)\cap cl(B\cup C)\right)^{c}\in\p \\ & \Leftrightarrow & \left(cl(A)\cap \left(cl(B)\cup cl(C)\right)\right)^{c} \in\p \\ & \Leftrightarrow & \left(\left(cl(A)\cap cl(B)\right)\cup \left(cl(A)\cap cl(C)\right)\right)^{c} \in\p \\ & \Leftrightarrow & \left(cl(A)\cap cl(B)\right)^{c}\cap \left(cl(A)\cap cl(C)\right)^{c} \in\p \\ & \Leftrightarrow &  \left(cl(A)\cap cl(B)\right)^{c}\in \p \text{ or }  \left(cl(A)\cap cl(C)\right)^{c} \in\p \\ & \Leftrightarrow &  A\hookrightarrow B \text{ or } A\hookrightarrow C.\end{array}$
  
  \item[(3)] Let $A\h B.$ \\
$\left.\begin{array}{rr} A\h B \Leftrightarrow \left(cl(A)\cap cl(B)\right)^{c} \in\p  \Rightarrow \left(cl(A)\right)^{c} \in\p \\ \p =2^X\setminus \{X\}\end{array}\right\} \Rightarrow \left(cl(A)\right)^{c}\neq X$
  \\
$\left.\begin{array}{rr}\Rightarrow cl(A)\neq\emptyset \Rightarrow A\neq\emptyset \Rightarrow  A^{c}\neq X \\ \p =2^X\setminus \{X\}\end{array}\right\} \Rightarrow A^c \in\p.$
%There is a problem in this proof. If $(\overline{A})^{c} \in\p$, then we can not obtain $A^{c} \in\p$ since $\p$ is downward closed.
  
  \item[(4)] Let $A\centernot \h B.$ \\
%$A\centernot \h B \Rightarrow  \left(cl(A) \cap cl(B)\right)^{c} \notin\p  \Rightarrow \left(\overline{A} \cap \overline{B}\right)^{c}=X \Rightarrow \overline{A} \cap \overline{B}=\emptyset   \Rightarrow  \left(\overline{A}\cap \overline{B}\right)^{c} \notin\p$.
$\left.\begin{array}{rr} A\centernot \h B \Rightarrow  \left(cl(A) \cap cl(B)\right)^{c} \notin\p  \\ \p =2^X\setminus \{X\}\end{array}\right\}\Rightarrow \left(cl(A) \cap cl(B)\right)^{c}=X  $
\\
$\left.\begin{array}{rr}\Rightarrow cl(A) \cap cl(B)=\emptyset\Rightarrow A \cap B=\emptyset\Rightarrow (A \cap B)^c=X \\ \p =2^X\setminus \{X\}\end{array}\right\} \Rightarrow (A\cap B)^c \notin\p.$
 \\  
  \item[(5)] Let $A\centernot \h B$. Then, $\left(cl(A)\cap cl(B)\right)^{c} \notin\p$. So $cl(A)\cap cl(B)=\emptyset$.
    So that, since $(X,\tau)$ is normal space there exist two disjoint open sets in $\tau$, $C$ and $D$ such that $cl(A)\subseteq C$ and $cl(B)\subseteq D$.
     Hence, $C^{c}$ is closed and $cl(A)\cap C^{c}=\emptyset$. This implies $cl(A)\centernot\h C^{c}$. Since $C\cap D=\emptyset$, we have $C\subseteq D^{c}$.
     It follows that $cl(C)\subseteq D^{c}$ since $D^{c}$ is closed. Therefore, $cl(C) \cap cl(B) = \emptyset$ and $ \left(cl(C)\cap cl(B)\right)^{c} \notin\p.$ Hence, $C \centernot\h B$.
     Let $E = C^{c}$. Then, $A \centernot\h B$ implies that there exists a subset $E$ such that $A \centernot\h E$ and $E^{c} \centernot\h B$ and $\left(E\cap E^{c}\right)^{c}\notin \p$.\qedhere
\end{enumerate}
\end{proof}

\section{Point-Primal Proximity Operator}

This section introduces point-primal proximity operator. Here, we study several properties of a primal-proximity space using this operator. \\

\begin{definition}
Let $(X,\hookrightarrow)$ be a primal-proximity space. Then, the operator  $\hh{(\cdot)}:2^X\to 2^X$ defined by $\hh{A}:=\{x\in X|\{x\}\hookrightarrow A\}$ is said to be point-primal proximity operator. Moreover,  $\hh{A}$ is said to be point-primal proximity of $A$.
\end{definition}

We now provide the following lemma without the proof.

\begin{lemma}\label{1}
Let $\mathcal{P}$ be a primal on a non-emptyset $X.$ If $A\h B,$ $A\subseteq C,$ and $B\subseteq D,$ then $C\h D.$
\end{lemma}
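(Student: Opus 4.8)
The plan is to prove the monotonicity property of the primal-proximity relation directly from the axioms in Definition~\ref{d1}, using the union axiom (2) as the main engine for enlarging the sets. The statement splits naturally into two independent enlargements: first enlarging $B$ to $D$ on the right, and then enlarging $A$ to $C$ on the left; symmetry axiom (1) lets us reduce the second to the first.

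First I would handle the right-hand enlargement. Suppose $A\h B$ and $B\subseteq D$. Then $D = B\cup D$, so $A\h(B\cup D)$ is exactly $A\h D$ once we invoke axiom (2); but to apply (2) we need to know that $A\h B$ implies $A\h(B\cup D)$, which is the ``only if'' half (the easy direction) of the biconditional in (2): from $A\h B$ we get $A\h B$ or $A\h D$, hence $A\h(B\cup D)=A\h D$. This gives $A\h D$. Next, for the left-hand enlargement, apply symmetry (1) to $A\h D$ to obtain $D\h A$; since $A\subseteq C$, the same argument with the roles swapped yields $D\h C$; applying (1) once more gives $C\h D$, which is the desired conclusion.

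I do not expect any real obstacle here: the proof is a short two-step application of axioms (1) and (2), and in fact only the trivial direction of axiom (2) is needed. The one point worth stating carefully is that $B\subseteq D$ implies $B\cup D=D$ (and likewise $A\cup C = C$), so that axiom (2) can be applied with the union already collapsed; this is the only place where the hypotheses $A\subseteq C$ and $B\subseteq D$ enter. Note also that the primal $\p$ plays no role in this particular lemma — it is purely a consequence of the lattice-theoretic axioms (1) and (2) — which is presumably why the authors omit the proof.
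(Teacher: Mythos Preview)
Your argument is correct and is exactly the natural proof from axioms (1) and (2) of Definition~\ref{d1}; the paper itself omits the proof entirely (``We now provide the following lemma without the proof''), so there is nothing to compare against, and your observation that the primal $\mathcal{P}$ plays no role is accurate.
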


\begin{lemma}\label{2}
Let $(X,\hookrightarrow, \mathcal{P})$ be a primal-proximity space and $A, B\subseteq X$. If $B\centernot \h A,$ then $\hh{A}\subseteq B^{c}$.
\end{lemma}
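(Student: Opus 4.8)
The plan is to prove the contrapositive on a pointwise level: I will show that every $x \in \hh{A}$ must lie in $B^c$, by arguing that if some $x \in \hh{A}$ were also in $B$, then we could force $B \h A$, contradicting the hypothesis $B \centernot\h A$. So fix $x \in \hh{A}$; by definition this means $\{x\} \h A$. Suppose toward a contradiction that $x \notin B^c$, i.e. $x \in B$, so that $\{x\} \subseteq B$.

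The key step is then an application of Lemma \ref{1} (the monotonicity lemma): from $\{x\} \h A$, together with the inclusions $\{x\} \subseteq B$ and $A \subseteq A$, Lemma \ref{1} yields $B \h A$. This directly contradicts the assumption $B \centernot\h A$. Hence no such $x$ exists, so $x \in B^c$ for every $x \in \hh{A}$, which is exactly $\hh{A} \subseteq B^c$.

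I do not anticipate a serious obstacle here; the only thing to be careful about is the bookkeeping of which set plays which role in Lemma \ref{1} — we use $\{x\}$ in place of "$A$" of that lemma, $A$ in place of "$B$", $B$ in place of "$C$", and $A$ in place of "$D$". One could alternatively avoid the contradiction framing and argue directly: $x \notin \hh{A}$ whenever $x \in B$, because $\{x\} \subseteq B$ and $B \centernot\h A$ force $\{x\} \centernot\h A$ (using Corollary \ref{c1}(1) or Lemma \ref{1} contrapositively). Either route is short; I would present the contrapositive-via-Lemma \ref{1} version since it is the most transparent use of the machinery already established.
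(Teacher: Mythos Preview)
Your proposal is correct and follows essentially the same route as the paper: assume a point lies in $\hh{A}\cap B$, unfold the definition to get $\{x\}\h A$ with $\{x\}\subseteq B$, and apply Lemma~\ref{1} to contradict $B\centernot\h A$. The only cosmetic difference is that the paper phrases the contradiction as $A\h B$ rather than $B\h A$, which is immaterial by symmetry of $\h$.
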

\begin{proof}
Suppose  $\hh{A}\cap B\neq \emptyset$. Then there exists at least a point $x\in \hh{A}\cap B$. So $x\in\hh{ A}$ and   $x\in B$, i.e.
$\{x\} \h A$ and $\{x\}\subseteq B$ by Lemma ~\ref{1} implies that $A\h B$,  which is a contradiction. Hence,
$\hh{A}\subseteq B^{c}$.
\end{proof}
%%%%%%%%%%%%%%%%%%%%%%%%%%%%%%%%%%%%%%%%%%%%%%%%%%%%%%%%%%%%%%%%%%%%%%%%%%%%%%%%%%%%%%%%%
\begin{theorem}\label{th1}
Let $(X,\hookrightarrow, \mathcal{P})$ be a primal-proximity space and $A, B\subseteq X$. If
$B\centernot \h A,$ then $B \centernot \h \hh{A}$.
\end{theorem}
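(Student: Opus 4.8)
The plan is to use axiom (5) of the primal-proximity to produce separating sets $C, D$ for the non-relation $B \centernot\h A$, and then to push one of these sets through the point-primal proximity operator. First I would apply Definition~\ref{d1}(5) to $B \centernot\h A$: there exist $C, D \subseteq X$ with $B \centernot\h C^c$, $D^c \centernot\h A$, and $(C \cap D)^c \notin \p$. The idea is that $D^c$ plays the role of a ``$\h$-neighborhood'' separating $B$ from $A$, so $\hh{A}$ should be trapped inside $(D^c)^c = D$, and then $B$ should fail to be $\h$-related to $D$ as well.

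Next I would extract the containment for $\hh{A}$. Since $D^c \centernot\h A$, by symmetry (Corollary~\ref{c1}(1)) we get $A \centernot\h D^c$; applying Lemma~\ref{2} with the roles ``$B := D^c$'' gives $\hh{A} \subseteq (D^c)^c = D$. Now the goal reduces to showing $B \centernot\h D$, since then Lemma~\ref{1} (monotonicity of $\h$) together with $\hh{A} \subseteq D$ yields $B \centernot\h \hh{A}$ — because if we had $B \h \hh{A}$, then $\hh{A} \subseteq D$ would force $B \h D$, a contradiction. So everything hinges on deriving $B \centernot\h D$ from the data $B \centernot\h C^c$ and $(C \cap D)^c \notin \p$.

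The main obstacle is exactly this last step: relating $D$ to $C^c$ using only the condition $(C \cap D)^c \notin \p$. The key observation should be that $(C \cap D)^c = C^c \cup D^c \notin \p$, and by the primal axiom (Definition of primal, clause (ii), or Corollary (ii) in the preliminaries) a subset of a set not in $\p$ is not in $\p$ — in particular one wants to conclude $D^c \notin \p$ or to relate $D$ and $C^c$ directly. Actually the cleaner route: from $(C^c \cup D^c)^c \notin \p$ we cannot immediately split, but note $D \subseteq D \cup C = (C^c \cap D^c)^c$; hmm. Instead I expect the intended argument observes that $C^c \cap D \subseteq C^c$, hence if $B \h D$ held, one would combine with $B \centernot\h C^c$ via the separation structure. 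The safest concrete path: since $(C\cap D)^c \notin \p$, axiom (3) of Definition~\ref{d1} gives $(C \cap D) \centernot\h E$ for every $E \subseteq X$; in particular $B \centernot\h (C \cap D)$. Combined with $B \centernot\h C^c$ and Corollary~\ref{c1}(2) we get $B \centernot\h \big((C\cap D) \cup C^c\big) = B \centernot\h (D \cup C^c) \supseteq B \centernot\h D$ by (2) again. This shows $B \centernot\h D$, and then $\hh{A} \subseteq D$ with Lemma~\ref{1} finishes the proof. I would double-check the set identity $(C \cap D) \cup C^c = D \cup C^c$ and the direction of the monotonicity lemma, as these are the places a sign/inclusion error could creep in.
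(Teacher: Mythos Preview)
Your proof is correct, and it diverges from the paper's only in the last step. Both arguments start identically: apply axiom~(5) to obtain $C, D$ with $B \centernot\h C^c$, $D^c \centernot\h A$, $(C \cap D)^c \notin \p$, and then use Lemma~\ref{2} to get $\hh{A} \subseteq D$. From there the paper argues pointwise that in fact $\hh{A} \subseteq C^c$: if $x \in \hh{A} \cap C$, then $x \in C \cap D$, so $(C \cap D)^c \subseteq X \setminus \{x\}$ and heredity of the primal gives $\{x\}^c \notin \p$; axiom~(3) then forces $\{x\} \centernot\h A$, contradicting $x \in \hh{A}$. Having $\hh{A} \subseteq C^c$ and $B \centernot\h C^c$, Lemma~\ref{1} finishes. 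You instead establish $B \centernot\h D$: from $(C \cap D)^c \notin \p$ and axiom~(3) (plus symmetry) you get $B \centernot\h (C \cap D)$, and combining with $B \centernot\h C^c$ via Corollary~\ref{c1}(2) yields $B \centernot\h \big((C \cap D) \cup C^c\big) = B \centernot\h (D \cup C^c)$, hence $B \centernot\h D$; then $\hh{A} \subseteq D$ and Lemma~\ref{1} finish. Your route is purely relational and exploits the union-splitting axiom cleanly, while the paper's route exposes how the primal condition acts on singletons; both are equally short. One small remark: the symmetry step before invoking Lemma~\ref{2} is unnecessary, since that lemma is already stated with hypothesis $D^c \centernot\h A$.
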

\begin{proof}
Let $B\centernot \h A.$ Then by (5) of Definition  ~\ref{d1}  there exist
$C, D\subseteq X$ such that $B \centernot \h C^{c}$, $D^{c} \centernot \h A$ and $(C\cap D)^{c}\notin \p$.
This result, combined with Lemma ~\ref{2}, implies that $\hh{A}\subseteq D$. Now, we want
to prove that $\hh{A}\subseteq C^{c}$. Let $x\in \hh{A}$, then $\{x\}\h A$. Suppose $x\in C$ then implies that
$x\in C\cap D$ and $ (C\cap D)^{c}\subseteq X\setminus\{x\}$, so $X\setminus\{x\}\notin \p$. Then,  by Definition  ~\ref{d1} (3), we have $\{x\}\centernot \h A$, which a contradiction. Hence, $x\in C^{c}$. So
$\hh{A}\subseteq C^{c}$. Now, we have by Lemma ~\ref{1} $B \centernot \h \hh{A}$. Hence, the theorem is proven.
\end{proof}
%%%%%%%%%%%%%%%%%%%%%%%%%%%%%%%%%%%%%%%%%%%%%%%%%%%%%%%
%%%%%%%%%%%%%%%%%%%%%%%%%%%%%%%%%%%%%%%%%%%%%%%%%%%%%%%%%%%%%%%%%%%%%%%%%%%%%%%%%%%%%%%%%
Due to Theorem ~\ref{th1} and (1) of Definition  ~\ref{d1}, we have the following corollary.
\begin{corollary}
Let $(X,\hookrightarrow, \mathcal{P})$ be a primal-proximity space and $A, B\subseteq X$. If
$B\centernot \h A,$ then $\hh{B} \centernot \h \hh{A}$.
\end{corollary}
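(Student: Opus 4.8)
The plan is to reduce this corollary to the already-established Theorem~\ref{th1} by applying it twice, each time after converting a non-relation into the equivalent symmetric form via property (1) of Definition~\ref{d1}. The goal is to show $\hh{B} \centernot\h \hh{A}$ starting from the hypothesis $B \centernot\h A$.

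First I would note that, since the primal-proximity relation is symmetric (Definition~\ref{d1}(1), equivalently Corollary~\ref{c1}(1)), the hypothesis $B \centernot\h A$ is the same as $A \centernot\h B$. Applying Theorem~\ref{th1} with the roles of $A$ and $B$ interchanged — that is, with ``$B$'' of the theorem being our $A$ and ``$A$'' of the theorem being our $B$ — the statement ``if $B \centernot\h A$ then $B \centernot\h \hh{A}$'' yields: from $A \centernot\h B$ we get $A \centernot\h \hh{B}$. By symmetry again this reads $\hh{B} \centernot\h A$.

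Next I would apply Theorem~\ref{th1} a second time, now to the non-relation $\hh{B} \centernot\h A$. Here ``$B$'' of the theorem is $\hh{B}$ and ``$A$'' of the theorem is $A$, so the conclusion ``$B \centernot\h \hh{A}$'' becomes $\hh{B} \centernot\h \hh{A}$, which is exactly what we want. So the proof is just: $B \centernot\h A \Rightarrow A \centernot\h B \Rightarrow A \centernot\h \hh{B} \Rightarrow \hh{B} \centernot\h A \Rightarrow \hh{B} \centernot\h \hh{A}$, alternating Theorem~\ref{th1} with symmetry.

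There is essentially no obstacle here; the only point requiring care is bookkeeping the order of the arguments when invoking Theorem~\ref{th1}, since that theorem is stated asymmetrically (it inserts the $\diamond$ only on the second argument). Using property (1) of Definition~\ref{d1} to swap arguments before each invocation handles this cleanly. I would write the proof in two or three short sentences rather than as a displayed chain, citing Theorem~\ref{th1} and Definition~\ref{d1}(1) at each step.
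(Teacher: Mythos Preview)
Your proof is correct and matches the paper's own approach exactly: the paper simply states that the corollary follows from Theorem~\ref{th1} together with property (1) of Definition~\ref{d1}, which is precisely your chain $B \centernot\h A \Rightarrow A \centernot\h B \Rightarrow A \centernot\h \hh{B} \Rightarrow \hh{B} \centernot\h A \Rightarrow \hh{B} \centernot\h \hh{A}$. Your careful bookkeeping of the argument order in Theorem~\ref{th1} is appropriate and the argument is complete.
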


\begin{theorem}~\label{th5}
Let $(X,\hookrightarrow, \mathcal{P})$ be a primal-proximity space and $A, B\subseteq X$. Then, the following properties hold:
\begin{enumerate}
           \item if $A\subseteq B,$ then $\hh{A}\subseteq \hh{B};$
           \item  $\hh{(A\cap B)} \subseteq  \hh{A}\cap \hh{B};$
           \item $\hh{A}\cup \hh{B}= \hh{(A\cup B)};$
           \item $\hh{\hh{A}} \subseteq  \hh{A};$    
           \item  if $A^c\notin\p,$ then $\hh{A}=\emptyset;$
           \item $\hh{\emptyset} = \emptyset;$
           \item $\hh{A}\setminus \hh{B}\subseteq \hh{(A\setminus B)};$
           \item if $B^{c}\notin \p$, then  $ \hh{(A\cup B)}=\hh{A}=\hh{(A\setminus B)};$
            \item if $ [(A\setminus B)\cup (B\setminus A)]^{c}\notin \p$, then $\hh{A}=\hh{B}.$
         \end{enumerate}
\end{theorem}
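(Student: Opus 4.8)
The plan is to prove the nine assertions of Theorem~\ref{th5} in an order that lets each later item reuse the earlier ones, relying throughout on Lemma~\ref{1}, Lemma~\ref{2}, Theorem~\ref{th1}, and the defining axioms (1)--(5) of primal-proximity.

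First I would dispatch the monotonicity/lattice items (1)--(3). For (1), if $A\subseteq B$ and $x\in\hh{A}$ then $\{x\}\h A$; since $\{x\}\subseteq\{x\}$ and $A\subseteq B$, Lemma~\ref{1} gives $\{x\}\h B$, so $x\in\hh{B}$. Item (2) is then immediate from (1) applied to $A\cap B\subseteq A$ and $A\cap B\subseteq B$. For (3), the ``$\subseteq$'' direction follows from (1); the ``$\supseteq$'' direction is the substantive one and is exactly axiom (2) of Definition~\ref{d1} read pointwise: $x\in\hh{(A\cup B)}$ iff $\{x\}\h(A\cup B)$ iff $\{x\}\h A$ or $\{x\}\h B$ iff $x\in\hh{A}\cup\hh{B}$.

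Next I would handle the ``vanishing'' items (5), (6), (4) in that order. For (5): if $A^c\notin\p$, then by axiom (3) of Definition~\ref{d1} we have $A\centernot\h B$ for every $B$, in particular $\{x\}$ is never $\h$-related to $A$... wait, the right reading is $A\centernot\h B$ where $A$ is the set with $A^c\notin\p$; but by symmetry (axiom (1)) this also gives $B\centernot\h A$ for all $B$, so $\{x\}\centernot\h A$ for every $x$, hence $\hh{A}=\emptyset$. Item (6) is the special case $A=\emptyset$ of (5) since $\emptyset^c=X\notin\p$ by axiom (i) of a primal. For (4), $\hh{\hh{A}}\subseteq\hh{A}$: I would argue by contradiction via Theorem~\ref{th1}. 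If $x\in\hh{\hh{A}}$ but $x\notin\hh{A}$, then $\{x\}\centernot\h A$; Theorem~\ref{th1} then yields $\{x\}\centernot\h\hh{A}$, contradicting $x\in\hh{\hh{A}}$. (Here I use symmetry freely to pass between $\{x\}\centernot\h A$ and $A\centernot\h\{x\}$ as the hypotheses of Lemma~\ref{2} and Theorem~\ref{th1} demand.)

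Finally the set-difference items (7)--(9). For (7): from $A\subseteq(A\setminus B)\cup B$ and item (3), $\hh{A}\subseteq\hh{(A\setminus B)}\cup\hh{B}$, and subtracting $\hh{B}$ gives $\hh{A}\setminus\hh{B}\subseteq\hh{(A\setminus B)}$. For (8), assume $B^c\notin\p$; then by (5), $\hh{B}=\emptyset$, so $\hh{(A\cup B)}=\hh{A}\cup\hh{B}=\hh{A}$ by (3); and since $A\setminus B\subseteq A\subseteq A\cup B$, monotonicity (1) squeezes $\hh{(A\setminus B)}$ between two copies of $\hh{A}$, giving equality. For (9), the hypothesis $[(A\setminus B)\cup(B\setminus A)]^c\notin\p$ means, writing $S=(A\setminus B)\cup(B\setminus A)$, that $\hh{S}=\emptyset$ by (5); then using $A\subseteq B\cup(A\setminus B)\subseteq B\cup S$ together with (1) and (3) gives $\hh{A}\subseteq\hh{B}\cup\hh{S}=\hh{B}$, and the symmetric argument with $A$ and $B$ swapped gives the reverse inclusion. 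I expect (4) to be the only genuinely delicate point, since it is the one place where the full strength of axiom (5) (through Theorem~\ref{th1}) rather than mere monotonicity or the union axiom is needed; the rest are short formal manipulations once (1), (3), and (5) are in hand.
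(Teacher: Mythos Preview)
Your argument tracks the paper's proof almost item by item, and items (1)--(7) and (9) are correct. The one real slip is in item~(8): the chain $A\setminus B\subseteq A\subseteq A\cup B$ together with monotonicity yields only
\[
\hh{(A\setminus B)}\subseteq\hh{A}\subseteq\hh{(A\cup B)}=\hh{A},
\]
which is a single inclusion $\hh{(A\setminus B)}\subseteq\hh{A}$, not a squeeze. Nothing in that chain produces the reverse inclusion $\hh{A}\subseteq\hh{(A\setminus B)}$. You need an extra step: either invoke your item~(7) with $\hh{B}=\emptyset$ to get $\hh{A}=\hh{A}\setminus\hh{B}\subseteq\hh{(A\setminus B)}$ (this is what the paper does), or observe $A\subseteq(A\setminus B)\cup B$ and apply (3) and (5) to get $\hh{A}\subseteq\hh{(A\setminus B)}\cup\hh{B}=\hh{(A\setminus B)}$.

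Apart from that, two minor stylistic differences from the paper worth noting: you derive (6) as the special case $A=\emptyset$ of (5), which is cleaner than the paper's direct contradiction argument; and in (9) you work with the whole symmetric difference $S$ at once via $A\subseteq B\cup S$, whereas the paper first splits $S^c\notin\p$ into $(A\setminus B)^c\notin\p$ and $(B\setminus A)^c\notin\p$ using the primal axiom and then applies (8) twice. Both routes are fine.
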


\begin{proof}
1) Let $A\subseteq B$ and $x\in \hh{A}.$ Then $\{x\}\h A.$ Since $A\subseteq B,$ by Lemma \ref{1}, we have $\{x\}\h B.$ Hence, $x\in \hh{B}.$ 
\\

%\\
%$\left.\begin{array}{r}
%x\in \hh{A}\Rightarrow \{x\}\h A \\
%A\subseteq B
%\end{array}\right\}\overset{\text{Lemma }\ref{1}}{\Rightarrow} %\{x\}\h B\Rightarrow x\in \hh{B}.$
%\\
2) Let $A,B\subseteq X.$ From $(1)$, it is not difficult to see that $\hh{(A\cap B)}\subseteq \hh{A}$  and $\hh{(A\cap B)}\subseteq \hh{B}$. Thus, we get $\hh{(A\cap B)}\subseteq\hh{ A}\cap \hh{B}.$
\\
%$
%\left.\begin{array}{r}
%A,B\subseteq X\Rightarrow A\cap B\subseteq A\overset{(1)}{\Rightarrow} \hh{(A\cap B)}\subseteq \hh{A}
%\\
%A,B\subseteq X\Rightarrow A\cap B\subseteq B\overset{(1)}{\Rightarrow} \hh{(A\cap B)}\subseteq \hh{B}
%\end{array}\right\}\Rightarrow \ldots (*)
%$
%\\
%
%Now, let $x\in \hh{A}\cap \hh{B}.$
%\\
%$x\in \hh{A}\cap \hh{B}\Rightarrow (x\in \hh{A})(x\in \hh{B})\Rightarrow (\{x\}\h A)(\{x\}\h B)\Rightarrow x\in \hh{(A\cap B)}$
%\\
%Then we have $\hh{A}\cap \hh{B}\subseteq \hh{(A\cap B)}\ldots (**)$
%\\
%$(*),(**)\Rightarrow \hh{(A\cap B)}= \hh{A}\cap \hh{B}.$
%\\

3) Let $A,B\subseteq X.$ From $(1)$, we can easily find that $\hh{A}\subseteq \hh{(A\cup B)}$  and $\hh{B}\subseteq \hh{(A\cup B)}$. Thus, obviously $\hh{A}\cup \hh{B} \subseteq \hh{(A\cup B)}$.

Conversely, let $y\in \hh{(A\cup B)}$. Then, $\{y\}\hookrightarrow A\cup B$. Due to Definition 3.1, either $\{y\}\hookrightarrow A$ or $\{y\}\hookrightarrow  B$. It indicates that either $y\in \hh{A}$ or $y\in \hh{B}$. So, we can conclude that $ \hh{(A\cup B)}\subseteq\hh{ A}\cup \hh{B}$.
\\
%Let $A,B\subseteq X.$ \\
%$
%\left.\begin{array}{r} A,B\subseteq X\Rightarrow A\subseteq A\cup B\overset{(1)}{\Rightarrow} A^{\h}\subseteq (A\cup B)^{\h}
%\\
%A,B\subseteq X\Rightarrow B\subseteq A\cup B\overset{(1)}{\Rightarrow} B^{\h}\subseteq (A\cup B)^{\h} \end{array}\right\}\Rightarrow %A^{\h}\cup B^{\h}\subseteq (A\cup %B)^{\h}.
%$
%\\
%Now let $x\in (A\cup B)^{\h}.$
%\\
%$x\in (A\cup B)^{\h}\Rightarrow x\h A\cup B$

4) Let $A\subseteq X$ and let $x\notin \hh{A}$. Then, $\{x\}\centernot \h A$. So, by Theorem ~\ref{th1}
we have  $\{x\}\centernot \h \hh{A}$. Hence, $x\notin \hh{\hh{A}}$. Thus, we get
$\hh{\hh{A}}\subseteq \hh{A}$.
\\

5) Let $A^c\notin\p.$ Then, by (3) of Definition \ref{d1}, $A\centernot \h B$ for all subsets $B$ of $X.$ Therefore, we have $A\centernot \h \{x\}$ for all $x\in X.$ Again, by (1) of Definition \ref{d1}, $\{x\}\centernot \h A$ for all $x\in X.$ This means $x\notin \hh{A}$ for all $x\in X.$ Hence, $\hh{A}=\emptyset.$
\\

%%%Please do not delete%%%
%$A^c\notin\p\Rightarrow (\forall B\subseteq X)(A\centernot \h B)\Rightarrow (\forall x\in X)(A\centernot \h \{x\})\Rightarrow (\forall x\in X)(\{x\}\centernot \h A)\Rightarrow \hh{A}=\emptyset.$\\

6) Let $\hh{\emptyset} \neq \emptyset.$ Thus, we assume that $x\in \hh{\emptyset}$. Hence, $\{x\}\hookrightarrow \emptyset$. By (1) of  Definition 3.1, $\emptyset \hookrightarrow \{x\}$. Again, by (3) of Definition 3.1, $\emptyset^{c}= X\in \mathcal{P}$; which is a contradiction to the definition of primal $\mathcal{P}$. Hence, $\hh{\emptyset}= \emptyset.$

7) For all $A, B \subseteq X$, $A=(A\setminus B)\cup (A\cap B)$, by (3) we have $\hh{A}=\hh{(A\setminus B})\cup \hh{(A\cap B)}\subseteq \hh{(A\setminus B})\cup \hh{B}$. Hence, $\hh{A}\setminus\hh{B}\subseteq \hh{(A\setminus B})$.

8) If  $B^{c}\notin \p$, then  $ \hh{(A\cup B)}=\hh{A}\cup \hh{B}=\hh{A}\cup \emptyset=\hh{A}$. Also, $\hh{A}\setminus\hh{B}\subseteq \hh{(A\setminus B})$, then $\hh{A}\subseteq \hh{(A\setminus B)}$. And  $\hh{(A\setminus B})=\hh{(A\cap B^{c})}\subseteq \hh{A}\cap \hh{B^{c}}\subseteq \hh{A}$.
Hence,  $ \hh{(A\cup B)}=\hh{A}=\hh{(A\setminus B)}.$

9) If $ [(A\setminus B)\cup (B\setminus A)]^{c}\notin \p$, then  $ (A\setminus B)^{c}\notin \p$ and  $(B\setminus A)^{c}\notin \p$. Since $\hh{A}=\hh{[(A\setminus B)\cup (A\cap B)]}$
and $ (A\setminus B)^{c}\notin \p$, by using (8) $\hh{A}=\hh{ (A\cap B)}\subseteq \hh{B}.$  It follows that $\hh{A}\subseteq\hh{B}$.
Similarly, since $\hh{B}=\hh{[(B\setminus A)\cup (B\cap A)]}$
and $ (B\setminus A)^{c}\notin \p$, by using (8) $\hh{B}=\hh{ (B\cap A)}\subseteq \hh{A}.$  It follows that $\hh{B}\subseteq\hh{A}$.
Hence, $\hh{A}=\hh{B}$.
\end{proof}

%%%%%%%%%%%%%%%%%%%%%%%%%%%%%%%%%%%%%%%%%%%%%%%%%%%%%%%%%%%%%%%%%%%%%%%%%%%%%%%%%%%%%%%%%

%%%%%%%%%%%%%%%%%%%%%%%%%%%%%%%%%%%%%

%%%%%%%%%%%%%%%%%%%%%%%%%%%%%%%%%%%%%%%%%%%%%%
%%%%%%%%%%%%%%%%%%%%%%%%%%%%%%%%%%%%%%%%%%%%%%%%%%%%%%%%%%%%%%%%%%%%%%%%%%%%%%%%%%%%%%%%%

%%%%%%%%%%%%%%%%%%%%%%%%%%%%%%%%

%%%%%%%%%%%%%%%%%%%%%%%%%%%%%%%%%%%%%%%%%%%%%%%%%%%%%%%%%%%%%%%%%%%%%%%%%%%%%%%%%%%%%%%%%

%%%%%%%%%%%%%%%%%%%%%%%%%%%%%%%%%%%%%%%%%%%%%%%%%%%%%%%%%%%%%%%%%%%%%%%%%%%%%%%%%%%%%%%%%

\begin{remark}
Let $(X,\hookrightarrow, \mathcal{P})$ be a primal-proximity space and $A\subseteq X$. The inclusion $A\subseteq \hh{A}$ need not be true in general as shown by following example.
\end{remark}

\begin{example}
Let $X=\{a, b, c\}$, $\p=\{\emptyset, \{b\}, \{c\}, \{b, c\}\}$ and the binary relation $\h$ on $2^X$  defined as Example \ref{exm1}. For the subset $A=\{b\},$ we have $A=\{b\}\nsubseteq \emptyset=\hh{A}.$  
\end{example}

\begin{theorem}
Let $(X,\hookrightarrow, \mathcal{P})$ be a primal-proximity space and $A, B\subseteq X$. Then, the following statements hold:\begin{enumerate}
           \item $A\cap \hh{B}=\emptyset$, for all $A^{c}\notin \p$ and $B\subseteq X$,
           \item $\{x\}\h X$ for all $x\in X$ if and only  if $\p= 2^X\setminus \{X\}$.
           \item if $\mathcal{P}=2^X\setminus \{X\},$ then $\hh{X}=X.$
         \end{enumerate}

\end{theorem}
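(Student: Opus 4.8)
The plan is to prove the three parts in order, since each can be handled largely independently using the primal axioms (Definition 2.1 / Corollary 2.2) and the basic properties of $\hookrightarrow$ from Definition 3.1 together with the results already established in Section 4 (especially Lemma \ref{2} and Theorem \ref{th5}).

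For part (1), I would take $A$ with $A^{c}\notin\p$ and $B\subseteq X$ arbitrary. By axiom (3) of Definition \ref{d1}, $A^{c}\notin\p$ forces $A\centernot\h C$ for every $C\subseteq X$; in particular $A\centernot\h B$. Applying Lemma \ref{2} with the roles of the two sets chosen so that $A$ plays the role of the non-related set, we get $\hh{B}\subseteq A^{c}$, i.e. $A\cap\hh{B}=\emptyset$. (Alternatively, and even more directly: if $x\in A\cap\hh{B}$ then $\{x\}\h B$, so by (1) $B\h\{x\}$, and since $\{x\}\subseteq A$ Lemma \ref{1} gives $B\h A$, contradicting $A\centernot\h B$.) This is routine.

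For part (2), I prove both implications. Assume first $\p=2^{X}\setminus\{X\}$. For any $x\in X$, $(\{x\}\cap X)^{c}=\{x\}^{c}\neq X$ (as $X\neq\emptyset$), hence $\{x\}^{c}\in\p$, i.e. $(\{x\}\cap X)^{c}\in\p$, so by axiom (4) of Definition \ref{d1} we get $\{x\}\h X$. Conversely, suppose $\{x\}\h X$ for all $x\in X$; I must show $\p=2^{X}\setminus\{X\}$. Since $X\notin\p$ always, it suffices to show every proper subset lies in $\p$, equivalently (contrapositive) that $A\notin\p\Rightarrow A=X$. Let $A\notin\p$. Pick any $x\in X$. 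From $\{x\}\h X$ and axiom (2) applied to the decomposition $X=A\cup A^{c}$, we get $\{x\}\h A$ or $\{x\}\h A^{c}$. But $\{x\}\h A$ would, by (1) and Corollary \ref{c1}(3), force $\{x\}^{c}\in\p$; more to the point, $A\notin\p$ combined with primal axiom (ii) (Corollary 2.2(ii)) gives that any superset of $A$ is outside $\p$ — I will instead argue via axiom (3) directly. The cleanest route: if $x\notin A$ then $\{x\}\subseteq A^{c}$, and I will show $A^{c}\in\p$ is impossible when $A\notin\p$ is to be leveraged — so let me restructure: assume for contradiction $A\neq X$, pick $x\notin A$, so $\{x\}\subseteq A^{c}$; since $\{x\}\h X = \{x\}$-related to $X$, and by monotonicity (Lemma \ref{1}) this does not immediately help. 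I therefore fall back on axiom (3): $\{x\}\h X$ together with (1) gives $X\h\{x\}$, hence by Corollary \ref{c1}(3) $X^{c}=\emptyset\in\p$? No. The workable argument is: since $\{x\}\h X$ for every $x$, and $X=A\cup A^{c}$, axiom (2) gives $\{x\}\h A$ or $\{x\}\h A^{c}$ for each $x$; if $x\notin A$ then $\{x\}\centernot\h A$ would be needed — indeed if $\{x\}\h A$ for some $x\notin A$, then by Lemma \ref{1} ($\{x\}\subseteq A^{c}$, $A\subseteq A$) we'd get $A^{c}\h A$, and then Corollary \ref{c1}(3) yields $A\in\p$, contradiction; so for every $x\notin A$ we must have $\{x\}\h A^{c}$, whence by (1) and Corollary \ref{c1}(3) again $(A^{c})^{c}=A\in\p$, contradiction — unless there is no such $x$, i.e. $A=X$. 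Hence $\p=2^{X}\setminus\{X\}$.

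For part (3), assume $\p=2^{X}\setminus\{X\}$. By definition $\hh{X}=\{x\in X:\{x\}\h X\}$, and part (2) gives $\{x\}\h X$ for all $x\in X$, so $\hh{X}=X$ immediately. I expect the only genuine obstacle to be the converse direction of part (2) — getting the contradiction to land requires combining axiom (2)'s decomposition $X=A\cup A^{c}$ with Corollary \ref{c1}(3) (``$A\h B$ implies $A^{c}\in\p$'') and Lemma \ref{1} carefully; once that bookkeeping is arranged the rest is immediate. I would write part (2) first, then derive (3) as a one-line corollary, and place (1) either first (as it is logically isolated) or last.
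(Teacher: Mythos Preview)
Your arguments for part~(1), part~(3), and the implication $\p=2^{X}\setminus\{X\}\Rightarrow\{x\}\h X$ in part~(2) are correct and match the paper's proof essentially line for line.

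For the converse implication in part~(2) your argument is correct but takes a longer detour than the paper. The paper simply applies Corollary~\ref{c1}(3) directly to the relation $\{x\}\h X$ (with $A=\{x\}$, $B=X$) to obtain $\{x\}^{c}\in\p$ for every $x\in X$; downward closure of the primal then forces every proper subset of $X$ into $\p$, and $X\notin\p$ is automatic. You actually reached for this same idea but applied Corollary~\ref{c1}(3) to the symmetrized relation $X\h\{x\}$, which only yields the trivial $\emptyset\in\p$, and then abandoned it in favour of the case split on $X=A\cup A^{c}$. That case split works (both branches land on $A^{c}\h(\text{something})$, hence $A\in\p$), but it is more bookkeeping than needed: the one-line application of Corollary~\ref{c1}(3) to $\{x\}\h X$ with $A=\{x\}$ is what you were looking for.
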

\begin{proof}
%(1) Suppose $A^{c}\notin \p$ and $A\cap \hh{B}\neq\emptyset.$ It follows that $A\centernot \h B$  and also there exists $x\in X$ such that $x\in A$ and $\{x\} \h B$. Hence, by Lemma ~\ref{1} we have $A\h B$, which is a contradiction. So, $A\cap \hh{B}=\emptyset$. \color{red} I rearranged this proof as below. I will delete this proof after you check. \color{black}

(1) Let $A^{c}\notin \p$ and suppose $A\cap \hh{B}\neq\emptyset.$ It follows that $A\centernot \h B$ since $A^{c}\notin \p$ and also $\hh{B}\nsubseteq A^c$. Hence, by Lemma \ref{2} we have
$A\h B$, which is a contradiction. Thus, $A\cap \hh{B}=\emptyset$.
\\

(2) If $\{x\}\h X$ for all $x\in X$, then by (3) of  Corollary  \ref{c1}, we have
$\{x\}^{c}\in \p$ for all $x\in X$. Hence, $\p= 2^X\setminus \{X\}$. Conversely, if
$\p= 2^X\setminus \{X\}$, then $(\{x\}\cap X)^{c}=(\{x\})^{c}\in \p$ and   by  (4) of Definition  \ref{d1}, we have $\{x\}\h X$ for all $x\in X$.
\\

 (3) Let $x\in X.$ Since $\mathcal{P}=2^X\setminus\{X\},$ then $(\{x\})^{c}=(\{x\}\cap X)^{c}\in \p$ and by (4) of Definition \ref{d1}, we get $\{x\}\h X$ for all $x\in X$. Hence, $\hh{X}=X.$
\end{proof}
%%%%%%%%%%%%%%%
%%%%%%%%%%%%%%%%%%%%%%
\begin{theorem}
Let $(X,\hookrightarrow, \mathcal{P})$ be a primal-proximity space. If $A, B, C \subseteq X$ and  $B\subsetneqq C$  such that
$A\centernot \h B$ but $A \h C$, then $A \h (C\setminus B)$.
\end{theorem}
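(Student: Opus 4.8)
The plan is to argue by decomposing $C$ and using the additivity axiom (2) of Definition~\ref{d1} together with its contrapositive form in Corollary~\ref{c1}(2). Since $B\subsetneqq C$, we may write $C = B \cup (C\setminus B)$. Applying axiom (2) to this decomposition gives $A \h C$ if and only if $A\h B$ or $A\h (C\setminus B)$. Since we are given $A\h C$, it follows that $A\h B$ or $A\h (C\setminus B)$.

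Now I would invoke the hypothesis $A\centernot\h B$ to eliminate the first disjunct. This forces $A\h (C\setminus B)$, which is exactly the desired conclusion. So the entire argument reduces to a single clean application of the union axiom followed by disjunctive syllogism.

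The only thing that needs a moment's care is verifying that the set-theoretic identity $C = B\cup (C\setminus B)$ is valid here, which holds precisely because $B\subseteq C$ (indeed $B\subsetneqq C$ is assumed, though we only use $B\subseteq C$). There is no genuine obstacle in this proof; the strict inclusion $B\subsetneqq C$ is essentially a red herring for the argument itself — it merely guarantees that $C\setminus B\neq\emptyset$, which is consistent with $A\h(C\setminus B)$ via axioms (3) and (1) but is not actually needed to derive the conclusion. The whole proof is two or three lines.

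\begin{proof}
Since $B\subsetneqq C$, we have $C = B\cup (C\setminus B)$. By (2) of Definition~\ref{d1}, $A\h C$ if and only if $A\h B$ or $A\h (C\setminus B)$. As $A\h C$ holds by hypothesis, we conclude that $A\h B$ or $A\h (C\setminus B)$. But $A\centernot\h B$ by hypothesis, so it must be the case that $A\h (C\setminus B)$.
\end{proof}
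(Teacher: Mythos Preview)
Your proof is correct and essentially identical to the paper's own argument: both write $C = B \cup (C\setminus B)$, apply axiom~(2) of Definition~\ref{d1} to split $A\h C$ into the disjunction $A\h B$ or $A\h (C\setminus B)$, and then eliminate the first disjunct via the hypothesis $A\centernot\h B$. Your observation that only $B\subseteq C$ (rather than the strict inclusion) is needed for the decomposition is a valid side remark.
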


\begin{proof}
Let $A, B, C \subseteq X$ and  $B\subsetneqq C$.. We consider  $D=C\setminus B$. Since
$A \h C$, then  $A \h B\cup (C\setminus B)=B\cup D$.
Then by Definition  \ref{d1},  $A \h B$ or $A \h D$. Now, $A \h B$ is not possible since we consider $A \centernot\h B$. Then obviously, $A \h (C\setminus B)$.
\end{proof}

\begin{theorem}\label{t11}
Let $(X,\h, \mathcal{P})$ be a primal-proximity space and $A, B\subseteq X$. If $A \centernot\h  B$, then there exists $C\subseteq X$ such that $A \centernot\h  C$ and $B \centernot\h  C^{c}$.
\end{theorem}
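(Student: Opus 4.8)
The plan is to apply condition (5) of Definition~\ref{d1} to the hypothesis $A\centernot\h B$, which immediately yields subsets $C_0, D\subseteq X$ with $A\centernot\h C_0^{c}$, $D^{c}\centernot\h B$ and $(C_0\cap D)^{c}\notin\p$. I would then set $C:=C_0^{c}$, so that the first conclusion $A\centernot\h C$ holds by construction. For the second conclusion I need $B\centernot\h C^{c}$, i.e.\ $B\centernot\h C_0$. What I have in hand is $D^{c}\centernot\h B$, so by symmetry (condition (1), or Corollary~\ref{c1}(1)) $B\centernot\h D^{c}$; the task is therefore to upgrade $B\centernot\h D^{c}$ to $B\centernot\h C_0$.

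The bridge between these is the relation $(C_0\cap D)^{c}\notin\p$. Taking complements, $C_0^{c}\cup D^{c}\notin\p$, and since $\p$ is downward closed (Definition~\ref{d1} of primal, part (ii)), any subset of $C_0^{c}\cup D^{c}$ is also outside $\p$. In particular I would like to compare $C_0$ with $D^{c}$. The cleanest route is contrapositive: suppose $B\h C_0$. Using the set identity $C_0 = (C_0\cap D)\cup(C_0\setminus D) \subseteq (C_0\cap D)\cup D^{c}$, condition (2) gives $B\h (C_0\cap D)$ or $B\h D^{c}$. The second disjunct is excluded since $B\centernot\h D^{c}$. For the first disjunct, $B\h(C_0\cap D)$ forces $(C_0\cap D)^{c}\in\p$: indeed by Corollary~\ref{c1}(3), $B\h(C_0\cap D)$ would give $B^{c}\in\p$, which is not quite it — the sharper tool is that $X\centernot\h (C_0\cap D)$ is not directly available, so instead I use Lemma~\ref{1}: from $B\h(C_0\cap D)$ and $(C_0\cap D)\subseteq(C_0\cap D)$ and $B\subseteq X$ we only get $X\h(C_0\cap D)$, which by Corollary~\ref{c1}(3) applied with the roles swapped (symmetry first) yields $(C_0\cap D)^{c}\in\p$, contradicting $(C_0\cap D)^{c}\notin\p$. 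Hence $B\centernot\h C_0 = C^{c}$, as required.

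The step I expect to be the main obstacle is precisely this last deduction that $B\h(C_0\cap D)$ leads to a contradiction: one must be careful that Corollary~\ref{c1}(3) is stated as "if there exists $B'$ with $A'\h B'$ then $(A')^{c}\in\p$," so to conclude $(C_0\cap D)^{c}\in\p$ from $B\h(C_0\cap D)$ one first applies symmetry to get $(C_0\cap D)\h B$ and then reads off $(C_0\cap D)^{c}\in\p$ directly — no use of Lemma~\ref{1} or of $X$ is actually needed, which simplifies the argument. Everything else is a routine bookkeeping of complements and the monotonicity/union axioms, so the proof should be short: invoke (5), relabel $C:=C_0^{c}$, dispatch $A\centernot\h C$ trivially, and dispatch $B\centernot\h C^{c}$ by the contrapositive decomposition $C^{c}=C_0\subseteq(C_0\cap D)\cup D^{c}$ together with Corollary~\ref{c1}(2),(3) and symmetry.
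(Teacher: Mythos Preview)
Your argument is correct. After invoking axiom~(5) to obtain $C_0,D$ and setting $C:=C_0^{c}$, the decomposition $C_0\subseteq (C_0\cap D)\cup D^{c}$ together with axiom~(2), symmetry, and Corollary~\ref{c1}(3) cleanly yields the contradiction $(C_0\cap D)^{c}\in\p$ from the assumption $B\h C_0$. (Your self-correction at the end is exactly right: from $B\h(C_0\cap D)$ apply symmetry to get $(C_0\cap D)\h B$, then Corollary~\ref{c1}(3) gives $(C_0\cap D)^{c}\in\p$ directly; no detour through $X$ or Lemma~\ref{1} is needed there, though Lemma~\ref{1} is implicitly used to pass from $B\h C_0$ to $B\h\big((C_0\cap D)\cup D^{c}\big)$.)

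The paper's proof takes a different and, as written, defective route. After obtaining $M,N$ from axiom~(5), it declares ``Let $M=X\setminus C$ and $N=C$'' and then reads off the conclusion. This tacitly assumes that the two sets supplied by axiom~(5) may be chosen to be complementary, i.e.\ $N=M^{c}$; but that is exactly the statement of the theorem, so the argument is circular. Your approach genuinely bridges the gap between the two-set separation of axiom~(5) and the desired single-set separation by exploiting the primal condition $(C_0\cap D)^{c}\notin\p$, and is the mathematically complete argument.
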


\begin{proof}
Since $A \centernot\h B$, thus by (5) of Definition 3.1, there exist $M, N\subseteq X $ such that $A \centernot\h  M^{c}$, $N^{c} \centernot\h  B$  and $(M\cap N)^c\notin \mathcal{P}$. Let $M= X\setminus C $ and $N=C$. Then, $(M\cap N)^c = X\notin \mathcal{P}$. Also, $A \centernot\h  (X\setminus C)^{c}$, $C^{c} \centernot\h  B$. It yields $A \centernot\h   C$, $B\centernot\h  C^{c} $. Hence, the proof is completed.
\end{proof}

\begin{corollary}
Let $(X,\h, \mathcal{P})$ be a primal-proximity space and $A, B, C\subseteq X$. If $A \centernot\h  B$ and $B \h  C$, then  $A \centernot\h  C$.
\end{corollary}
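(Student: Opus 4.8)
The statement to prove is: if $A \centernot\h B$ and $B \h C$, then $A \centernot\h C$. The natural strategy is to argue by contradiction, invoking the transitivity-type result of the immediately preceding Theorem~\ref{t11}. So suppose, for contradiction, that $A \h C$. We also know $B \h C$, but what we really want to use is the hypothesis $A \centernot\h B$ together with Theorem~\ref{t11}.

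First I would apply Theorem~\ref{t11} to the hypothesis $A \centernot\h B$: this yields a set $C_0 \subseteq X$ with $A \centernot\h C_0$ and $B \centernot\h C_0^{c}$. Now I have $B \centernot\h C_0^{c}$, and by Lemma~\ref{2} this gives $\hh{C_0^{c}} \subseteq B^{c}$, or alternatively (using symmetry, Corollary~\ref{c1}(1)) $C_0^{c} \centernot\h B$. The plan is to play the assumed relation $A \h C$ against $A \centernot\h C_0$. Since $A \centernot\h C_0$, Corollary~\ref{c1}(4) gives $(A \cap C_0)^{c} \notin \p$; combined with $A \h C$ we should be able to locate where $C$ meets $C_0^{c}$. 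Concretely, write $C = (C \cap C_0) \cup (C \cap C_0^{c})$; by condition~(2) of Definition~\ref{d1}, $A \h C$ forces $A \h (C \cap C_0)$ or $A \h (C \cap C_0^{c})$. The first is impossible: $A \h (C \cap C_0)$ together with $C \cap C_0 \subseteq C_0$ and Lemma~\ref{1} would give $A \h C_0$, contradicting $A \centernot\h C_0$. Hence $A \h (C \cap C_0^{c})$, and since $C \cap C_0^{c} \subseteq C_0^{c}$, Lemma~\ref{1} yields $A \h C_0^{c}$.

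At this point I would bring in the relation $B \centernot\h C_0^{c}$ (equivalently $C_0^{c} \centernot\h B$ by symmetry). The aim is to close the loop by showing $A \h C_0^{c}$ and $B \centernot\h C_0^{c}$ and $B \h C$ are incompatible — but this would itself require a transitivity statement of the very form we are proving, so I should instead be more careful about which antecedent I feed to Theorem~\ref{t11}. The cleaner route: apply Theorem~\ref{t11} to $A \centernot\h B$ and simultaneously exploit that Theorem~\ref{t11} is symmetric in form, picking the witness set so that $C^{c}$ (the given $C$) is forced into a corner. Alternatively — and I suspect this is the intended short proof — one applies Theorem~\ref{t11} directly: from $A \centernot\h B$ obtain $C' $ with $A \centernot\h C'$ and $B \centernot\h C'^{c}$; then from $B \h C$ and $B \centernot\h C'^{c}$, condition~(2) applied to $C = (C \cap C') \cup (C \cap C'^{c})$ forces $B \h (C \cap C')$ hence $B \h C'$ is \emph{not} needed — rather one shows $C \subseteq C'$ up to a set whose complement is not in $\p$, and then $A \centernot\h C'$ upgrades to $A \centernot\h C$ via Theorem~\ref{th5}(9) or Theorem~\ref{th5}(1)-type monotonicity.

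The main obstacle is pinning down the correct bookkeeping so as not to invoke circular transitivity: the proof must extract its transitivity solely from Theorem~\ref{t11} plus the additivity axiom~(2) and the monotonicity Lemma~\ref{1}, without implicitly assuming the corollary itself. I expect the decomposition $C = (C \cap C') \cup (C \setminus C')$ combined with $A \centernot\h C'$ (ruling out the $C \cap C'$ part by Lemma~\ref{1}) and $B \centernot\h C'^{c}$ (controlling the $C \setminus C'$ part, since $B \h C$ by~(2) forces $B \h (C \cap C')$, giving $B \h C'$ — a contradiction unless the $C \setminus C'$ piece is $\p$-negligible) to be the crux; once $C \setminus C'$ has complement outside $\p$, Theorem~\ref{th5}(9) or a direct application of axiom~(2) and~(3) promotes $A \centernot\h C'$ to $A \centernot\h C$, finishing the argument.
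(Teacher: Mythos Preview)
Your proposal does not close, and it cannot: the corollary as stated in the paper is false. Take the primal-proximity of Example~\ref{exm1} on $X=\{1,2,3\}$ with $\mathcal{P}=2^X\setminus\{X\}$, so that $A\h B$ holds precisely when $A\cap B\neq\emptyset$. With $A=\{1\}$, $B=\{2\}$, $C=\{1,2\}$ one has $A\centernot\h B$ and $B\h C$, yet $A\h C$. The paper supplies no proof of this corollary, and the assertion appears simply to be in error (contrast Lemma~\ref{lem2}, which gives the correct transitivity \emph{through a point}).

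The circularity you flagged is therefore a genuine obstruction, not a bookkeeping issue. Your decomposition via Theorem~\ref{t11} and axiom~(2) correctly produces a witness $C'$ with $A\centernot\h C'$ and $B\centernot\h C'^{c}$, and then $A\h C$ forces $A\h(C\setminus C')$ while $B\h C$ forces $B\h(C\cap C')$; but this only reproduces the original hypotheses with $C$ replaced by a subset, and no contradiction emerges. Your proposed salvage --- that $(C\setminus C')^{c}\notin\mathcal{P}$, so that Theorem~\ref{th5}(9) would apply --- is unsupported and fails in the counterexample: every admissible $C'$ from Theorem~\ref{t11} must contain $2$ and omit $1$, whence $C\setminus C'=\{1\}$ and $(C\setminus C')^{c}=\{2,3\}\in\mathcal{P}$.
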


%\begin{theorem}
%Let $(X,\h, \mathcal{P})$ be a primal-proximity space  \color{red} such that $\mathcal{P}=2^X\setminus \{X\}$ \color{black} and $A, B\subseteq X$. If $A\h  B$, then  $A \centernot\h  B^{c}$.  {\color{red} We will delete this result.} \end{theorem}

%%%%%%%%%%%%%%%%%%%%%
\section{Proximal Closed Sets and  $\hh \tau$ Topology}

In this section, proximal closed sets are defined. Moreover, various results between a primal-proximity space and a primal topological space are obtained using proximal closed sets and related notions.\\

%\begin{definition}
%Let $(X,\h, \mathcal{P})$ be a primal-proximity space. Then, $X$ is called separated if it satisfies  $\{x\} \h \{y\}$ implies $x = y$, and $(X,\h, \mathcal{P})$ is called a separated primal-proximity space. {\color{red}We will delete this Definition.}
%\end{definition}
%%%%%%%%%%%%%%%%%%%%%%%%%%%%%%%%%%%%%%%%%%%%%%%%%%%%%%%%%%%%%%%%%%%%%%%%%%
%%%%%%%%%%%%%%%%%%%%%%%%%%%%%%%%%%%%%%%%%%%%%%%%%%
\begin{definition}
Let $(X,\h, \mathcal{P})$ be a primal-proximity space. Then, a subset $F$ of $X$ is called proximity-closed if and only if  $\{x\} \h F$ implies $x \in F$.
\end{definition}
%%%%%%%%%%%%%%%%%%%%%%%%%%%%%%%%%%%%%%%%%%%%%%%%%%%%%%%%%%%%%%%%%%%%%%%%%%
%%%%%%%%%%%%%%%%%%%%%%%%%%%%%%%%%%%%%%%%%%%%%%%%%%
\begin{lemma} \label{lem2}
If there is a point $x\in X$ such that $A \h  \{x\}$ and $\{x\} \h B$, then $A \h B$.
\end{lemma}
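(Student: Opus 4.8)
The plan is to prove the contrapositive together with an application of condition (5) of Definition~\ref{d1}, in essentially the same spirit as Theorem~\ref{th1}. So I would start by assuming, for contradiction, that $A \centernot\h B$, and aim to derive that one of the two hypotheses $A \h \{x\}$ or $\{x\} \h B$ must fail. Since $A \centernot\h B$, condition (5) gives subsets $C, D \subseteq X$ with $A \centernot\h C^{c}$, $D^{c} \centernot\h B$, and $(C \cap D)^{c} \notin \p$.

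Next I would locate the point $x$ relative to $C$ and $D$. The key observation is that since $(C \cap D)^{c} \notin \p$ and $\p$ is downward closed, no subset of $(C\cap D)^{c}$ lies in $\p$; in particular, if $x \notin C \cap D$ then $\{x\}^{c} \supseteq C \cap D$... wait, more carefully: if $x \notin C\cap D$ then $(C\cap D) \subseteq \{x\}^{c}$, hence $(C\cap D)^c \supseteq \{x\}$, which does not immediately help. Instead I would argue: if $x \in C\cap D$, then $(C\cap D)^{c} \subseteq \{x\}^{c} = X\setminus\{x\}$, so $X \setminus \{x\} \notin \p$ by Corollary~2.2(ii) applied to $(C\cap D)^c$; equivalently $(\{x\})^{c}\notin\p$, so by condition (3) of Definition~\ref{d1} we get $\{x\} \centernot\h Z$ for every $Z\subseteq X$, contradicting both $A\h\{x\}$ (via symmetry) and $\{x\}\h B$. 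Hence we may assume $x \notin C \cap D$, i.e.\ $x \in C^{c}$ or $x \in D^{c}$.

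Then I would split into these two cases. If $x \in C^{c}$: from $A \centernot\h C^{c}$ and $\{x\}\subseteq C^{c}$, Lemma~\ref{2} (with the roles arranged so that $A\centernot\h C^c$ gives $\hh{C^c}\subseteq A^c$, hence $x\notin\hh{C^c}$, i.e.\ $\{x\}\centernot\h A$, hence $A\centernot\h\{x\}$ by symmetry) contradicts $A \h \{x\}$. If $x \in D^{c}$: from $D^{c} \centernot\h B$ and $\{x\} \subseteq D^{c}$, Lemma~\ref{1} gives $\{x\}\centernot\h B$ (contrapositive form: $\{x\}\h B$ with $\{x\}\subseteq D^c$ and $B\subseteq B$ would force $D^c \h B$), contradicting $\{x\} \h B$. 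Either way we reach a contradiction, so $A \h B$.

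The main obstacle I anticipate is the bookkeeping around condition (3) and the primal axioms: one must be careful that "$(C\cap D)^{c}\notin\p$ and $C\cap D \subseteq \{x\}^{c}$" correctly yields "$\{x\}^{c}\notin\p$" (this is exactly Corollary~2.2(ii), the contrapositive of downward closure), and then that condition (3) applies in the form "$\{x\}^c\notin\p \Rightarrow \{x\}\centernot\h Z$ for all $Z$." The case analysis itself is routine once the right instances of Lemma~\ref{1} and Lemma~\ref{2} (and symmetry, condition (1)) are invoked; the only real subtlety is making sure the point $x$ is forced into $C^{c}\cup D^{c}$, which is where the hypothesis $(C\cap D)^{c}\notin\p$ does its work.
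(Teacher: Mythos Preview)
Your argument is correct and follows the same contrapositive-plus-case-split strategy as the paper. The only difference is that the paper first invokes Theorem~\ref{t11} to obtain a \emph{single} separating set $C$ with $A\centernot\h C$ and $C^{c}\centernot\h B$, which collapses the analysis to the two cases $x\in C$ and $x\in C^{c}$; you instead work directly from condition~(5) of Definition~\ref{d1}, which produces two sets $C,D$ and forces the extra case $x\in C\cap D$ (handled via $(C\cap D)^{c}\notin\p\Rightarrow \{x\}^{c}\notin\p$). Your route is slightly longer but more self-contained.

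One small cleanup: in your case $x\in C^{c}$, the parenthetical appeal to Lemma~\ref{2} is garbled (from $\hh{C^{c}}\subseteq A^{c}$ you cannot conclude $x\notin\hh{C^{c}}$, and $x\notin\hh{C^{c}}$ would mean $\{x\}\centernot\h C^{c}$, not $\{x\}\centernot\h A$). Just use the contrapositive of Lemma~\ref{1} exactly as you do in the $x\in D^{c}$ case: $A\h\{x\}$ together with $\{x\}\subseteq C^{c}$ would give $A\h C^{c}$, a contradiction.
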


\begin{proof}
Suppose $A\centernot \h B$, by Theorem \ref{t11}, there exists a subset $C$ such that
$A\centernot \h C$ and $C^c\centernot \h B$. Now,  either $x\in C$ or  $x\in C^c$.

Case (1): If $x\in C$, then $A \centernot\h  \{x\}$. For if $A \h  \{x\}$, then $A \h  C$, by Lemma ~\ref{1} which is a contradiction.

Case (2): If $x\in C^c$, then $\{x\} \centernot\h  B$. Therefore, if $A \h  \{x\}$ and $\{x\} \h B$, then $A \h B$.
\end{proof}
%%%%%%%%%%%%%%%%%%%%%%%%%%%%%%%%%%%%%%%%%%%%%%%%%%%%%%%%%%%%%%%%%%%%%%%%%%
%%%%%%%%%%%%%%%%%%%%%%%%%%%%%%%%%%%%%%%%%%%%%%%%%%
\begin{theorem} \label{th3}
The collection of complements of all proximity-closed sets of $(X,\h, \mathcal{P})$  forms a topology on $X$. This topology is denoted by $\hh{\tau}$.
\end{theorem}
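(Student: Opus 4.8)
The plan is to verify the three Kuratowski closure axioms for the operator that sends a set to the intersection of all proximity-closed sets containing it; equivalently, I would work directly with the family $\mathcal{F}$ of proximity-closed sets and show it is closed under arbitrary intersections and finite unions, and contains both $\emptyset$ and $X$, so that the collection of complements is a topology. First I would dispose of the trivial cases: $X$ is proximity-closed vacuously (the condition ``$\{x\}\h F \Rightarrow x\in F$'' is automatic when $F=X$), and $\emptyset$ is proximity-closed because, by Theorem~\ref{th5}(6), $\hh{\emptyset}=\emptyset$, so there is no $x$ with $\{x\}\h\emptyset$ and the implication holds vacuously.

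Next I would handle arbitrary intersections. Let $\{F_i\}_{i\in I}$ be proximity-closed and set $F=\bigcap_i F_i$. If $\{x\}\h F$, then since $F\subseteq F_i$ Lemma~\ref{1} gives $\{x\}\h F_i$ for every $i$, hence $x\in F_i$ for every $i$ by proximity-closedness, hence $x\in F$. So $F\in\mathcal{F}$. This shows the complements form a family closed under arbitrary unions, which together with $\emptyset,X\in\mathcal{F}$ gives the $\emptyset,X$ and union axioms for the candidate topology.

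The main work is closure under \emph{finite} unions: given $F_1,F_2$ proximity-closed, show $F_1\cup F_2$ is proximity-closed. Suppose $\{x\}\h(F_1\cup F_2)$; by axiom (2) of Definition~\ref{d1} either $\{x\}\h F_1$ or $\{x\}\h F_2$, and in either case the corresponding proximity-closedness yields $x\in F_1$ or $x\in F_2$, i.e. $x\in F_1\cup F_2$. Hence $F_1\cup F_2\in\mathcal{F}$, and by induction any finite union stays in $\mathcal{F}$. The complements then satisfy closure under finite intersections, completing the verification that $\hh{\tau}:=\{F^c : F\in\mathcal{F}\}$ is a topology on $X$.

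I do not anticipate a genuine obstacle here: the only axiom that uses anything beyond monotonicity (Lemma~\ref{1}) is the finite-union case, and that falls out immediately from the ``additivity'' axiom (2) of the primal-proximity. The one point to be careful about is not to confuse proximity-closedness of $F$ with the identity $\hh{F}=F$ (which would require $F\subseteq\hh{F}$, and the preceding remark and example show that can fail); the correct reading is $\hh{F}\subseteq F$, and all three closure properties of $\mathcal{F}$ are really statements about the operator $\hh{(\cdot)}$ restricted to its monotonicity and additivity, both already established in Theorem~\ref{th5}.
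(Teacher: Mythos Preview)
Your proposal is correct and follows essentially the same approach as the paper's own proof: verify that $\emptyset$ and $X$ are proximity-closed, use Lemma~\ref{1} (monotonicity) for arbitrary intersections, and use axiom~(2) of Definition~\ref{d1} for finite unions. Your added remarks (invoking Theorem~\ref{th5}(6) for $\emptyset$ and cautioning that proximity-closedness means $\hh{F}\subseteq F$ rather than $\hh{F}=F$) are sound clarifications but not departures from the paper's argument.
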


\begin{proof}
Since $X$ and $\emptyset$ are proximity-closed in $(X,\h, \mathcal{P})$, their complements $\emptyset$ and $X$ are in $\hh{\tau}$.

Let $\{F_{i}: i\in I\}$ be a collection of proximity-closed sets. If $\{x\} \h \bigcap\{F_{i}: i\in I\},$ then
 $\{x\} \h F_{i}$ for every $i \in I$ by Lemma ~\ref{1}. Since $F_{i}$ is proximity-closed, $x\in F_{i}$ for every $i \in I$. Hence, $x \in \bigcap\{F_{i}: i\in I\}$ and $\bigcap\{F_{i}: i\in I\}$ is proximity-closed. Therefore, if $(X\setminus F_{i})\in \hh{\tau}$ for every $i \in I$, then $\bigcup \{X\setminus F_{i}: i\in I\}$ is the complement of
$\bigcap\{F_{i}: i\in I\}$ which  belongs to $\hh{\tau}$.

Finally, let $F_{1}$ and $F_{2}$ be two proximity-closed sets. If $\{x\} \h  F_{1}\cup F_{2},$ then $\{x\} \h  F_{1}$ or $\{x\} \h  F_{2}$. Thus,
$x \in  F_{1}$ or $x \in  F_{2}$ since $F_{1}$ and $F_{2}$ are proximity-closed. This implies $x \in F_{1}\cup F_{2}$. Thus, $F_{1}\cup F_{2}$ is proximity-closed.
Therefore, if $X\setminus F_{1} \in \hh{\tau}$ and $X\setminus F_{2} \in \hh{\tau},$  then 
 $(X\setminus F_{1}) \cap (X\setminus F_{2})=X\setminus (F_{1}\cup F_{2}) \in \hh{\tau}$. Hence, $\hh{\tau}$  is
a topology on $X$.
\end{proof}
%%%%%%%%%%%%
%%%%%%%%%%%%%%%%%%%%%%%%%%%%%%%%%%%%%%%%%%%%%%%%%%
\begin{theorem}
Let $(X,\h, \mathcal{P})$ be a primal-proximity space. The set $\hh{A}$ is the closure of $A$ where the closure is taken with respect to the topology  $\hh{\tau}$ and denoted by $cl_{\hh{\tau}}(A).$ 
\end{theorem}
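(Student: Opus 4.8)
The plan is to identify $\hh{A}$ with the smallest $\hh{\tau}$-closed set containing $A$. By Theorem~\ref{th3} the $\hh{\tau}$-closed sets are exactly the proximity-closed sets, so
$cl_{\hh{\tau}}(A)=\bigcap\{F\subseteq X:F\text{ is proximity-closed and }A\subseteq F\}$, a family which is nonempty since $X$ itself is proximity-closed. I would then prove the two inclusions $\hh{A}\subseteq cl_{\hh{\tau}}(A)$ and $cl_{\hh{\tau}}(A)\subseteq\hh{A}$ separately.

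For the first inclusion, take any proximity-closed $F$ with $A\subseteq F$. Monotonicity of the operator (Theorem~\ref{th5}(1)) gives $\hh{A}\subseteq\hh{F}$, while proximity-closedness of $F$ says precisely that $\hh{F}\subseteq F$; hence $\hh{A}\subseteq F$, and intersecting over all such $F$ yields $\hh{A}\subseteq cl_{\hh{\tau}}(A)$. For the reverse inclusion it is enough to check that $\hh{A}$ is \emph{itself} a proximity-closed set that contains $A$, since then $\hh{A}$ occurs among the sets in the intersection defining $cl_{\hh{\tau}}(A)$. That $\hh{A}$ is proximity-closed is immediate from Theorem~\ref{th5}(4): if $\{x\}\h\hh{A}$, then $x\in\hh{\hh{A}}\subseteq\hh{A}$.

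The step I expect to be the real obstacle is the remaining requirement $A\subseteq\hh{A}$, i.e.\ that $x\in A$ implies $\{x\}\h A$. This is exactly the inclusion that the remark and the three-element example given in Section~4 warn can fail for an arbitrary primal $\p$, so in full generality the natural conclusion is the weaker identity $cl_{\hh{\tau}}(A)=A\cup\hh{A}$: the set $A\cup\hh{A}$ contains $A$, it is proximity-closed because $\hh{(A\cup\hh{A})}=\hh{A}\cup\hh{\hh{A}}=\hh{A}\subseteq A\cup\hh{A}$ by Theorem~\ref{th5}(3) and (4), and every proximity-closed $F\supseteq A$ contains it by the argument of the previous paragraph. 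The statement as written, $cl_{\hh{\tau}}(A)=\hh{A}$, goes through once one knows $A\subseteq\hh{A}$; this holds, for instance, when $\p=2^{X}\setminus\{X\}$, since then $(\{x\}\cap A)^{c}=\{x\}^{c}\neq X$, so $(\{x\}\cap A)^{c}\in\p$ and axiom~(4) of Definition~\ref{d1} forces $\{x\}\h A$ for every $x\in A$. Accordingly, I would either carry out the argument under the hypothesis $\p=2^{X}\setminus\{X\}$, or phrase the general result as $cl_{\hh{\tau}}(A)=A\cup\hh{A}$.
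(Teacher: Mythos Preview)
Your argument for $\hh{A}\subseteq cl_{\hh{\tau}}(A)$ is correct and parallels the paper's, though organised differently: the paper fixes $x\in\hh{A}$ and argues $\{x\}\h A\Rightarrow\{x\}\h cl_{\hh{\tau}}(A)\Rightarrow x\in cl_{\hh{\tau}}(A)$ via monotonicity (Lemma~\ref{1}) and proximity-closedness of $cl_{\hh{\tau}}(A)$, whereas you intersect over all proximity-closed supersets. For the reverse inclusion the paper takes a longer route than you do: rather than invoking Theorem~\ref{th5}(4), it starts from $x\notin\hh{A}$, applies Theorem~\ref{t11} to produce $C$ with $\{x\}\centernot\h C$ and $C^{c}\centernot\h A$, deduces $\hh{A}\subseteq C$, and from there concludes that $\hh{A}$ is proximity-closed. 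Your direct appeal to $\hh{\hh{A}}\subseteq\hh{A}$ is cleaner and reaches the same point.

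Where you go beyond the paper is in spotting that the statement, as written, cannot hold for an arbitrary primal: since topological closure always contains the set, the equality $cl_{\hh{\tau}}(A)=\hh{A}$ would force $A\subseteq\hh{A}$, yet the paper's own Remark and three-element Example show this can fail. The paper's proof shares this gap---it establishes that $\hh{A}$ is proximity-closed but never verifies $A\subseteq\hh{A}$, so the concluding ``Therefore $cl_{\hh{\tau}}(A)\subseteq\hh{A}$'' is unsupported in general. Both of your remedies are sound: under $\p=2^{X}\setminus\{X\}$ axiom~(4) of Definition~\ref{d1} gives $A\subseteq\hh{A}$ and the argument goes through, while in full generality your identity $cl_{\hh{\tau}}(A)=A\cup\hh{A}$ holds and, incidentally, shows that $\hh{\tau}$ coincides with the topology $\tau^{*}$ defined later via $cl^{*}$.
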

\begin{proof}
Let $x\in  \hh{A}.$ Then $\{x\}\h A$. By Lemma \ref{1}, $\{x\}\h cl_{\hh{\tau}}(A)$
since $A\subseteq cl_{\hh{\tau}}(A)$ and $cl_{\hh{\tau}}(A)$ is proximity-closed in $\hh{\tau}.$ Thus, $x\in cl_{\hh{\tau}}(A).$ Hence, $\hh{A}\subseteq cl_{\hh{\tau}}(A)$.

Conversely, let $x\notin \hh{A}.$ Then $\{x\}\centernot \h A$. By Theorem  \ref{t11},  there exists a subset $C$ such that $\{x\}\centernot \h C$  and $C^c\centernot \h A$. Since there is no point of $C^c$ which is related to
$A$, then $\hh{A}\subseteq C$. By Lemma \ref{1},  $\{x\}\centernot \h cl_{\hh{\tau}}(A)$.
Thus, $\hh{A}$ is proximity-closed in $\hh{\tau}$. Therefore, $cl_{\hh{\tau}}(A)\subseteq \hh{A}$. Hence,  $cl_{\hh{\tau}}(A)=\hh{A}$.
\end{proof}
\begin{definition} \cite{Ku}
The the operator $\Phi:2^X\to 2^X$ is a Kuratowski closure operator provided:
\begin{enumerate}
  \item[(1)] $\Phi(\emptyset)=\emptyset$;
  \item[(2)] $A\subseteq \Phi(A)$  for every $A \in 2^X$;
  \item[(3)] $\Phi(A\cup B)=\Phi(A)\cup\Phi(B)$ for any $A,B \in 2^X$;
  \item[(4)] $\Phi(\Phi(A))=\Phi(A)$ for every $A \in 2^X$.
\end{enumerate}
\end{definition}
 \begin{theorem}
Let $(X,\h, \mathcal{P})$ be a primal-proximity space such that $\mathcal{P}=2^X\setminus \{X\}$. Then, the  operator
  $\hh{A}:=\{x\in X|\{x\}\hookrightarrow A\}$ on a primal-proximity space $(X, \h, \p)$
  is a Kuratowski closure operator.
\end{theorem}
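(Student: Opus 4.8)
The plan is to verify the four Kuratowski axioms for the operator $\hh{(\cdot)}$ one at a time, drawing on the properties already collected in Theorem~\ref{th5} and exploiting the extra hypothesis $\p = 2^X \setminus \{X\}$ wherever the unconditional statements in Theorem~\ref{th5} fall short.

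\medskip

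\textbf{Axiom (1):} $\hh{\emptyset} = \emptyset$ is exactly item (6) of Theorem~\ref{th5} (equivalently item (5) applied to $A = \emptyset$, since $\emptyset^c = X \notin \p$), so nothing new is required.

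\textbf{Axiom (3):} $\hh{(A \cup B)} = \hh{A} \cup \hh{B}$ is exactly item (3) of Theorem~\ref{th5}, and holds for any primal-proximity space with no hypothesis on $\p$.

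\textbf{Axiom (4):} For idempotence I have the inclusion $\hh{\hh{A}} \subseteq \hh{A}$ already, as item (4) of Theorem~\ref{th5}. I would therefore only need the reverse inclusion $\hh{A} \subseteq \hh{\hh{A}}$, which will follow once Axiom (2) is in place: apply the monotone property (item (1) of Theorem~\ref{th5}) to the inclusion $\hh{A} \supseteq$ \,(itself) after first establishing $A \subseteq \hh{A}$; more directly, $\hh{A} \subseteq \hh{\hh{A}}$ is just Axiom (2) applied to the set $\hh{A}$ in place of $A$. So Axiom (4) reduces to Axiom (2).

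\medskip

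\textbf{Axiom (2), the main obstacle:} The inclusion $A \subseteq \hh{A}$ is precisely the statement that fails in general (see the Remark and Example immediately preceding this theorem), so this is where the hypothesis $\p = 2^X \setminus \{X\}$ must be used in an essential way. The plan is: take $x \in A$; I must show $\{x\} \h A$. Since $x \in A$, the set $\{x\} \cap A = \{x\}$ is nonempty, hence $(\{x\} \cap A)^c = X \setminus \{x\} \neq X$, and therefore $(\{x\} \cap A)^c \in \p$ because $\p = 2^X \setminus \{X\}$ contains every proper subset of $X$. Now condition (4) of Definition~\ref{d1} gives $\{x\} \h A$ directly, i.e. $x \in \hh{A}$. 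This establishes $A \subseteq \hh{A}$, completing Axiom (2); and as noted above, Axioms (2) and (4) together with the already-known reverse inclusion of item (4) of Theorem~\ref{th5} give full idempotence. I expect the only subtlety to be bookkeeping — making sure each invocation of Definition~\ref{d1}(4) and of Theorem~\ref{th5} is to a legitimately applicable instance — since the hypothesis on $\p$ makes the crucial nonemptiness-to-membership step immediate. Assembling the four verified axioms then yields that $\hh{(\cdot)}$ is a Kuratowski closure operator.
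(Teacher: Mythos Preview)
Your proposal is correct and follows essentially the same plan as the paper: axioms (1) and (3) are pulled directly from Theorem~\ref{th5}, and your argument for axiom (2) is exactly the content of the paper's Remark~3.3(1), spelled out in full. The one genuine difference is in axiom (4): the paper proves the reverse inclusion $\hh{A}\subseteq\hh{\hh{A}}$ by a separate direct argument (if $x\notin\hh{\hh{A}}$ then $\{x\}\centernot\h\hh{A}$, so $(\{x\}\cap\hh{A})^c\notin\p$, hence equals $X$, hence $x\notin\hh{A}$), whereas you observe that this inclusion is simply axiom (2) applied with $\hh{A}$ in place of $A$. Your route is more economical and avoids the redundancy of essentially repeating the axiom-(2) argument; the paper's direct argument, on the other hand, makes the use of the hypothesis $\p=2^X\setminus\{X\}$ visible a second time but is logically unnecessary once (2) is in hand.
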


\begin{proof}
    (1) By (6) of Theorem \ref{th5}, $\hh{\emptyset}=\emptyset$.

    (2) If $x \in A$, then $\{x\} \h A$. Hence, $x \in \hh{A}$. This shows that $A \subseteq \hh{A}$.

    (3) By (3) of Theorem \ref{th5},  $\hh{(A\cup B)}=\hh{A}\cup \hh{B}$.

    %\color{blue} or $\{x\}\in \hh{(A\cup B)}$ if and only if $\{x\}\h (A\cup B)$ if and only if $\{x\}\h A $ or $\{x\}\h  B$ if and only if $\{x\}\in  A $ or $\{x\}\in   B$ if and only if $\{x\}\in \hh{A}\cup \hh{B}$. Hence $\hh{(A\cup B)}=\hh{A}\cup \hh{B}$. \color{black} \color{red} No need for lines marked in blue.\color{black}

    (4) By (4) of Theorem \ref{th5}, we have always $\hh{\hh{A}} \subseteq \hh{A}.$
    Now, let $x\notin \hh{\hh{A}}.$ Then $\{x\}\centernot \h {\hh{A}}.$ By (4) of Corollary  
    \ref{c1}, we have $\left(\{x\}\cap {\hh{A}}\right)^c\notin\mathcal{P}.$ Since $\mathcal{P}=2^X\setminus\{X\},$ we get $\left(\{x\}\cap {\hh{A}}\right)^c=X$ which means that $\{x\}\cap {\hh{A}}=\emptyset.$ Thus, we have $x\notin {\hh{A}}.$ Hence, $\hh{A}\subseteq\hh{\hh{A}} $ and $\hh{\hh{A}}=\hh{A}$
 which completes the proof and this topology is denoted by $\hh{\tau}$. \color{black}
% Then $\{x\}\centernot {\hh{A}}.$  Therefore, $\hh{A}\subseteq \hh{\hh{A}}$. \color{blue} If $x\notin \hh{A}$, then $\{x\}\centernot \h A$. This implies that there exists a subset $E$ such that $\{x\}\centernot \h E$ and $E^{c}\centernot \h A$ and $(E\cap E^{c})^{c}\notin \p$. Now, if $A$ is not contained in $E$, then there exists an element $a\in \hh{A}$  but $a \notin  E.$ Hence, $\{a\}\h A$ and $a\in E^{c}$, contradicting $E^{c}\centernot \h A$. Hence, $\hh{A} \subseteq E$. By Lemma 2.2, $\{x\} \centernot \h \hh{A}$ since $\{x\} \centernot \h E$. \color{black} We should delete lines marked blue because we proved it in (4) of Theorem \ref{th5}.
%This means that $x\notin \hh{\hh{A}}.$  Hence, $\hh{\hh{A}}\subseteq \hh{A}$ and $\hh{\hh{A}}=\hh{A}$ which is completes the proof and this topology is denoted by $\hh{\tau}$.
\end{proof}
%%%%%%%%%%%%%%%%%%%%%%%%%%%%%%%%%%%%%%%%%%%%%%%%%%
\begin{theorem}
Let $(X,\h,\mathcal{P})$ be a primal-proximity space.
%%NO NEED THIS CONDITION \color{red} such that $\mathcal{P}=2^X\setminus \{X\}$. \color{black}  
Then, the operator $cl^{*}: 2^{X}\to 2^{X}$
defined by $cl^{*}(A):=A\cup \hh{A}$ satisfies Kuratowski closure 
axioms and induces a topology on $X$ called $\tau^{*}$ is given by $\tau^{*}=\{A\subseteq X| cl^{*}(A^{c})=A^{c}\}.$
\end{theorem}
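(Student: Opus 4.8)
The plan is to verify the four Kuratowski closure axioms for the operator $cl^{*}(A) = A \cup \hh{A}$, drawing on the properties of the point-primal proximity operator established in Theorem~\ref{th5}. Crucially, unlike the previous theorem, here we do \emph{not} assume $\mathcal{P} = 2^X \setminus \{X\}$, so the argument must rely only on the general properties (1)--(7) of Theorem~\ref{th5}. Once the four axioms are checked, the fact that $cl^{*}$ generates a topology $\tau^{*} = \{A \subseteq X \mid cl^{*}(A^c) = A^c\}$ is the standard correspondence between Kuratowski closure operators and topologies, which I would simply cite rather than reprove.

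First I would dispatch the easy axioms. For axiom (1): $cl^{*}(\emptyset) = \emptyset \cup \hh{\emptyset} = \emptyset \cup \emptyset = \emptyset$ using (6) of Theorem~\ref{th5}. For axiom (2): $A \subseteq A \cup \hh{A} = cl^{*}(A)$ is immediate from the definition. For axiom (3): I would compute
\[
cl^{*}(A \cup B) = (A \cup B) \cup \hh{(A \cup B)} = (A \cup B) \cup (\hh{A} \cup \hh{B}) = (A \cup \hh{A}) \cup (B \cup \hh{B}) = cl^{*}(A) \cup cl^{*}(B),
\]
where the middle equality uses (3) of Theorem~\ref{th5} and the rest is set-theoretic rearrangement.

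The main obstacle is axiom (4): idempotence, $cl^{*}(cl^{*}(A)) = cl^{*}(A)$. The inclusion $cl^{*}(A) \subseteq cl^{*}(cl^{*}(A))$ follows from axiom (2), so the real work is the reverse inclusion $cl^{*}(cl^{*}(A)) \subseteq cl^{*}(A)$. Expanding, $cl^{*}(cl^{*}(A)) = (A \cup \hh{A}) \cup \hh{(A \cup \hh{A})}$, and by (3) of Theorem~\ref{th5} the last term is $\hh{A} \cup \hh{\hh{A}}$. By (4) of Theorem~\ref{th5}, $\hh{\hh{A}} \subseteq \hh{A}$. Therefore $cl^{*}(cl^{*}(A)) = A \cup \hh{A} \cup \hh{A} \cup \hh{\hh{A}} = A \cup \hh{A} = cl^{*}(A)$. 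So in fact even axiom (4) reduces cleanly to the already-proven properties, and no extra hypothesis on $\mathcal{P}$ is needed — this is precisely why $cl^{*}$ works unconditionally while the bare operator $\hh{(\cdot)}$ did not.

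Having verified all four axioms, I would conclude by invoking the classical theorem (Kuratowski) that a closure operator satisfying these axioms determines a unique topology whose closed sets are exactly the fixed points of the operator; equivalently, the open sets are $\tau^{*} = \{A \subseteq X \mid cl^{*}(A^c) = A^c\}$. This completes the proof. The only point requiring a little care in the write-up is the bookkeeping of unions in the idempotence step, but there is no genuine difficulty.
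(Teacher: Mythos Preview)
Your proof is correct and follows essentially the same approach as the paper: both verify the four Kuratowski axioms by reducing them to parts (6), (3), and (4) of Theorem~\ref{th5}, with the idempotence step handled via $\hh{(A\cup\hh{A})}=\hh{A}\cup\hh{\hh{A}}\subseteq\hh{A}$. The only difference is cosmetic: you add some explanatory commentary and explicitly cite the classical Kuratowski correspondence at the end, whereas the paper leaves that implicit.
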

\begin{proof}
    (1) By (6) of Theorem \ref{th5}, we have $cl^{*}(\emptyset)=\emptyset\cup \hh{\emptyset}=\emptyset$.

    (2) Let $A\subseteq X.$ Since $cl^*(A):=A\cup \hh{ A},$ we have $A\subseteq cl^*(A).$
    %By definition of operator $cl^{*}$, implies that $A\subseteq cl^{*}(A)$ for all $A\subseteq X.$

    (3)  Let $A,B\subseteq X.$ By (3) of Theorem \ref{th5}, we have
    $$\begin{array}{rcl} cl^{*}(A\cup B) & = & (A\cup B) \cup\hh{(A\cup B)} \\ & = & (A\cup B) \cup \left(\hh{A}\cup \hh{B}\right) \\ & = & \left(A\cup \hh{A}\right)\cup \left(B\cup \hh{B}\right)
    \\ & = & cl^{*}(A)\cup cl^{*}{(B)}.\end{array}$$
\color{black}

    (4) Let $A\subseteq X.$ By (4) of Theorem \ref{th5}, we have 
    $$\begin{array}{rcl} cl^*(cl^*(A)) & = & cl^*(A)\cup \hh{cl^*(A)} \\ & = & \left(A\cup \hh{A}\right)\cup \hh{\left(A\cup \hh{A}\right)} \\ & = & \left(A\cup \hh{A}\right)\cup \left(\hh{A} \cup \hh{\hh{A}}\right)
    \\ & = & \left(A\cup \hh{A}\right)\cup \hh{A} \\ & = & A\cup \hh{A} \\ & = & cl^*(A).\qedhere \end{array}$$ 
    
   %By  (1) of Theorem ~\ref{th5} and (2), we have     $cl^{*}(A)\subseteq cl^{*}(cl^{*}(A))$. So it suffices to show that for all $A\subseteq X$, we have $cl^{*}(cl^{*}(A))\subseteq cl^{*}(A)$ or equivalently that $x\notin cl^{*}(A) $, then $x\notin cl^{*}(cl^{*}(A)) $.
   %Now let $x\notin cl^{*}(A) $. Hence $x\notin A$ and $x\notin \hh{A}$. This  $x\notin A$ and $x\centernot\h{A}$, then by Theorem ~\ref{th1} implies that  $x\centernot\h \hh{A}$ and by (2) of Definition ~\ref{d1} we have $x\centernot\h (A\cup \hh{A})$, hence $x\centernot\h cl^{*}(A)$. Then by Lemma ~\ref{2} $x\notin \hh{cl^{*}(A)}$. Also $x\notin cl^{*}(A)\cup \hh{cl^{*}(A)}= cl^{*}(cl^{*}(A)) $. Hence $  cl^{*}(cl^{*}(A))=cl^{*}(A)$ which is completes the proof.
\end{proof}

%\begin{question}
%    In a primal-proximity space is the topology $\tau^{*}$ and $\hh{\tau}$ are independent? {\color{red}We will delete this question.}
%\end{question}

\begin{theorem}
Let $(X,\h, \mathcal{P})$ be a primal-proximity space. Then the following properties  hold:
\begin{enumerate}
           \item $B\centernot \h A$ if and only if $B\centernot \h cl^{*}(A)$.
           \item $cl^{*}\left(\hh{A}\right)=\hh{A}$.
           \item $cl^{*}\left(\hh{A}\right)=\hh{cl^{*}(A)}$.         \end{enumerate}
\end{theorem}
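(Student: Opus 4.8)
The plan is to reduce all three claims to the properties of the point-primal proximity operator already recorded in Theorem~\ref{th5}, together with Theorem~\ref{th1} and Corollary~\ref{c1}, using only the definitional identity $cl^{*}(A)=A\cup\hh{A}$. Nothing beyond monotonicity, finite additivity, and idempotence of $\hh{(\cdot)}$ is needed.

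First I would prove (1). For the nontrivial implication, assume $B\centernot\h A$. Since $cl^{*}(A)=A\cup\hh{A}$, Corollary~\ref{c1}(2) reduces the statement $B\centernot\h cl^{*}(A)$ to the conjunction $B\centernot\h A$ and $B\centernot\h\hh{A}$; the first conjunct is the hypothesis, and the second is precisely Theorem~\ref{th1}. For the converse, assume $B\centernot\h cl^{*}(A)$; because $A\subseteq cl^{*}(A)$, the contrapositive of Lemma~\ref{1} (applied with $B\subseteq B$ and $A\subseteq cl^{*}(A)$) yields $B\centernot\h A$. Hence the equivalence.

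Next, for (2) I would simply unfold $cl^{*}(\hh{A})=\hh{A}\cup\hh{\hh{A}}$ and invoke Theorem~\ref{th5}(4), namely $\hh{\hh{A}}\subseteq\hh{A}$, so the union collapses to $\hh{A}$. For (3), I would first compute $\hh{cl^{*}(A)}=\hh{(A\cup\hh{A})}=\hh{A}\cup\hh{\hh{A}}$ using the additivity of the operator (Theorem~\ref{th5}(3)), and then apply Theorem~\ref{th5}(4) once more to get $\hh{cl^{*}(A)}=\hh{A}$. Combining this with (2), which identifies $cl^{*}(\hh{A})$ with $\hh{A}$, gives $cl^{*}(\hh{A})=\hh{A}=\hh{cl^{*}(A)}$.

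No step here should present a genuine obstacle; each is a one- or two-line consequence of results proved earlier in the paper. The only points requiring mild care are (i) invoking Lemma~\ref{1} in its contrapositive (monotonicity) form for the converse in (1), and (ii) keeping the expansion $cl^{*}(S)=S\cup\hh{S}$ consistent when it is applied with $S=\hh{A}$ and with $S=A\cup\hh{A}$.
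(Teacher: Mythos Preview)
Your proposal is correct and follows essentially the same approach as the paper: both arguments expand $cl^{*}(A)=A\cup\hh{A}$, invoke Theorem~\ref{th1} together with the finite additivity of $\h$ (Definition~\ref{d1}(2) or its contrapositive Corollary~\ref{c1}(2)) for part (1), and use Theorem~\ref{th5}(3)--(4) for parts (2) and (3). The only cosmetic difference is that in (3) the paper chains $\hh{A}\cup\hh{\hh{A}}=\hh{(A\cup\hh{A})}$ directly, whereas you reduce both sides separately to $\hh{A}$; and in (1) you spell out the easy converse via Lemma~\ref{1}, which the paper leaves implicit in the biconditional.
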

\begin{proof}
(1) Let $B\centernot \h A.$ Then, by Theorem \ref{th1}, we have $B\centernot \h \hh{A}$. Hence, by (2) of Definition \ref{d1},
$B\centernot \h (A\cup\hh{A})=cl^{*}(A)$ if and only if $B\centernot \h A$ and $B\centernot \h \hh{A}$.

(2)  Let $A\subseteq X.$ By (4) of Theorem \ref{th5}, we have $$cl^{*}\left(\hh{A}\right)=\hh{A}\cup\hh{\hh{A}}=\hh{A}.$$
%It is clear that $\hh{A}\subseteq cl^{*}(\hh{A})$.  Now, let $x\in cl^{*}(\hh{A})$. Then, $x\in \hh{A}$ or $\{x\}\h \hh{A}$. It follows that $x\in \hh{\hh{A}}$. Thus by lemma ~\ref{lem1}, we get $x\in \hh{A}$. Hence $cl^{*}(\hh{A})=\hh{A}$.

(3)  Let $A\subseteq X$. By (3) of Theorem \ref{th5}, we have $$cl^{*}\left(\hh{A}\right)=\hh{A}\cup \hh{\hh{A}}=\hh{\left(A\cup\hh{A}\right)}=\hh{cl^{*}(A)}.\qedhere$$
\end{proof}

\begin{theorem}
Let $(X,\h, \mathcal{P})$ be a primal-proximity space and $A,B,H\subseteq X$  such that $A\subseteq B.$  If $A\h B$ and $\{b\} \h H$ for all $b\in B$, then $A\h H.$
\end{theorem}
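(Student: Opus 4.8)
The plan is to recast the hypothesis on $H$ in terms of the point-primal proximity operator and then feed it into Theorem~\ref{th1}. First I would observe that, by the very definition $\hh{H}=\{x\in X\mid \{x\}\h H\}$, the assumption ``$\{b\}\h H$ for every $b\in B$'' is exactly the statement that $B\subseteq \hh{H}$.

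Next I would combine this with the hypothesis $A\h B$: applying Lemma~\ref{1} with $A\subseteq A$ and $B\subseteq \hh{H}$ immediately gives $A\h \hh{H}$.

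Finally I would invoke Theorem~\ref{th1}, applied with our $A$ playing the role of the theorem's ``$B$'' and our $H$ playing the role of the theorem's ``$A$''; in that form it states $A\centernot\h H\Rightarrow A\centernot\h \hh{H}$, and its contrapositive $A\h \hh{H}\Rightarrow A\h H$ is precisely what is needed. Chaining the three steps yields $A\h H$.

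The argument is short once the reformulation $B\subseteq\hh{H}$ is spotted, so the only thing to watch is the bookkeeping of variable names when instantiating Theorem~\ref{th1}; I would also note that the inclusion $A\subseteq B$ is never actually used and may be dropped from the hypotheses. If one prefers to avoid the operator $\hh{\cdot}$, the same conclusion follows by contradiction: supposing $A\centernot\h H$, Theorem~\ref{t11} supplies a set $C$ with $A\centernot\h C$ and $C^{c}\centernot\h H$; since no point of $C^{c}$ can be $\h$-related to $H$ (otherwise Lemma~\ref{1} would give $C^{c}\h H$), every $b\in B$ must lie in $C$, so $B\subseteq C$, and then Lemma~\ref{1} applied to $A\h B$ forces $A\h C$, contradicting $A\centernot\h C$.
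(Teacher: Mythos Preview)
Your proof is correct, and in fact both routes you sketch are cleaner than the paper's own argument. The paper proceeds by contradiction directly from axiom~(5): assuming $A\centernot\h H$, it produces $C,D$ with $A\centernot\h C^{c}$, $D^{c}\centernot\h H$, $(C\cap D)^{c}\notin\mathcal{P}$, then uses $A\h B$ and Lemma~\ref{1} to find a point $x\in B\cap C$ with $\{x\}\h H$, and finishes with a two-case analysis on whether $x\in D$ or $x\in D^{c}$, each branch leading to a contradiction. Your main approach short-circuits all of this by recognising the hypothesis as $B\subseteq\hh{H}$ and invoking Theorem~\ref{th1}, which already packages the axiom-(5) work; this is both shorter and more conceptual, since it exhibits the statement as an immediate consequence of the monotonicity of $\h$ and the fact that $\hh{(\cdot)}$ does not enlarge $\h$-relations. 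Your alternative via Theorem~\ref{t11} is essentially a streamlined version of the paper's argument with the case split removed. Your observation that the hypothesis $A\subseteq B$ is superfluous is also correct---neither the paper's proof nor yours uses it.
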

\begin{proof} Suppose $A\centernot \h H$, then  there exist $C, D\subseteq X$ such that $A \centernot \h C^{c}$, $D^{c}\centernot \h B$
and $(C\cap D)^{c}\notin \p$. This result, combined with $A\h B$ and (2) of Definition ~\ref{d1}, implies that $B\nsubseteq C^{c}$, that is $B\cap C\neq\emptyset$.
It follows that there is  a point  $x\in X$ such that $\{x\}\h H$ and $x\in C$. Then, there  are  two cases either $x\in D$ or $x\notin D$.

Case 1: If $x\in D$. Hence $X\setminus\{x\}\notin \p$, by (3) of Definition \ref{d1}, implies $\{x\}\centernot \h H$ for any subset $H$ of $X$, which
is contradiction.

Case 2: $x\in D^{c}$, then $\{x\}\color{black}\centernot \h B$. This result, combined with  (3) and (4) of Definition \ref{d1}, implies $\{x\} \centernot \h H$, which
is contradiction. Hence, $A\h H$.
\end{proof}
\begin{example}~\label{ex1}
Let $(X,\tau,\mathcal{P})$ be a primal topological space and $\h$ be a binary relation on $2^{X}$
defined as $A \h B$ if and only if $(A\cap cl(B))^{c}\in \p$. Then $``\h"$ is not a primal-proximity relation on $2^{X}$ but satisfes  (2),(3),(4) and (5) of Definition ~\ref{d1}. Hence, in this case $\tau\subseteq \tau^{*}$.
\end{example}

\begin{proof}
 We  want to show that $cl^{*}(A)\subseteq cl(A)$ for all $A\subseteq X$. Let $x\in cl^{*}(A)=A\cup \hh{A}$. Then, $x\in A$ or $x\in \hh{A}$. If $x\in A$, then $x\in cl(A)$. Now if $x\in \hh{A}$, then $\{x\}\h A$. Hence, $(\{x\}\cap cl(A))^{c}\in \p$ and so $(\{x\}\cap cl(A))^{c}\neq X$. Thus,  $\{x\}\cap cl(A)\neq \emptyset$ which means $x\in cl(A)$. Therefore, $\tau\subseteq \tau^{*}$.\color{black}
\end{proof}

\begin{example}~\label{ex2}
Let $(X,\tau,\mathcal{P})$ be a primal topological space and $\h$ be a binary relation on $2^{X}$
defined as $A \h B$ if and only if $(A\cap cl^{\diamond}(B))^{c}\in \p$. Then $``\h"$ is not a primal-proximity relation on $2^{X}$ but satisfy  (2), (3),  (4) and (5) of Definition ~\ref{d1}. Hence, in this case $\tau^{\diamond}\subseteq \tau^{*}$.
\end{example}

\begin{proof}
 We  want to show that $cl^{*}(A)\subseteq cl^{\diamond}(A)$ for all $A\subseteq X$. Let $x\in cl^{*}(A)=A\cup \hh{A}$. Then $x\in A$ or $x\in \hh{A}$. If $x\in A$, then $x\in A\subseteq A\cup A^{\diamond}=cl^{\diamond}(A)$. Now if $x\in \hh{A}$, then $\{x\}\h A$. Hence, $(\{x\}\cap cl^{\diamond}(A))^{c}\in \p$ and so $(\{x\}\cap cl^{\diamond}(A))^{c}\neq X$. Thus, $\{x\}\cap cl^{\diamond}(A)\neq \emptyset$ which means $x\in cl^{\diamond}(A)$. Therefore, $\tau^{\diamond}\subseteq \tau^{*}$.
\end{proof}

\begin{definition}
Let $(X,\tau,\mathcal{P})$ be a primal topological space. Then, $X$ is said to be a primal-regular space if for all $x\in X$ and  $\tau^{\diamond}$-closed set  $F$
such that $(\{x\}\cap F)^{c}\notin \p$ there exist two open sets $H,G$ such that $x\in  H$ and $F\subseteq G$ and
$(H\cap G)^{c}\notin \p$.
\end{definition}

\begin{theorem}
Let $(X,\tau,\mathcal{P})$ be a primal topological space. 
%%NO NEED THIS CONDITION \color{red} such that $\mathcal{P}=2^X\setminus \{X\}$. \color{black}
Let $X$ be a primal-regular space and $\h$ be a binary relation on $2^{X}$ as defined in  Example \ref{ex2}, then $\tau^{\diamond}=\tau^{*}$.

\end{theorem}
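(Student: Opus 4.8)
The plan is to upgrade the one-sided inclusion obtained in Example~\ref{ex2} to an equality of topologies. In Example~\ref{ex2} it was shown that $cl^{*}(A)\subseteq cl^{\diamond}(A)$ for every $A\subseteq X$, and hence $\tau^{\diamond}\subseteq\tau^{*}$. So it remains only to prove the reverse containment $cl^{\diamond}(A)\subseteq cl^{*}(A)$ for every $A\subseteq X$; once this is in hand, $cl^{\diamond}=cl^{*}$ as operators on $2^{X}$, and since $\tau^{*}=\{A\subseteq X: cl^{*}(A^{c})=A^{c}\}$ while $\tau^{\diamond}$ is induced by $cl^{\diamond}$ in the same fashion, this identity of operators forces $\tau^{\diamond}=\tau^{*}$. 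Writing $cl^{\diamond}(A)=A\cup A^{\diamond}$ and $cl^{*}(A)=A\cup\hh{A}$, the inclusion to be proved is equivalent to $A^{\diamond}\subseteq A\cup\hh{A}$, and I would establish it in contrapositive form: from $x\notin cl^{*}(A)$ I would produce a $\tau$-open neighbourhood of $x$ witnessing $x\notin A^{\diamond}$, whence $x\notin A\cup A^{\diamond}=cl^{\diamond}(A)$.

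The mechanism is a single application of primal-regularity. Suppose $x\notin cl^{*}(A)=A\cup\hh{A}$. Then $x\notin A$ and $\{x\}\centernot\h A$; unfolding the relation $\h$ of Example~\ref{ex2} and using that $cl^{\diamond}(A)$ is $\tau^{\diamond}$-closed (so its $\diamond$-closure is itself), the statement $\{x\}\centernot\h A$ reads $(\{x\}\cap cl^{\diamond}(A))^{c}\notin\mathcal{P}$. Set $F:=cl^{\diamond}(A)$, a $\tau^{\diamond}$-closed set with $(\{x\}\cap F)^{c}\notin\mathcal{P}$; primal-regularity then furnishes $\tau$-open sets $H,G$ with $x\in H$, $F\subseteq G$ and $(H\cap G)^{c}\notin\mathcal{P}$. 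Since $A\subseteq F\subseteq G$ we have $G^{c}\subseteq A^{c}$, whence $(H\cap G)^{c}=H^{c}\cup G^{c}\subseteq H^{c}\cup A^{c}$; as a superset of a set not lying in $\mathcal{P}$ is again not in $\mathcal{P}$ (a primal is downward closed under inclusion), $H^{c}\cup A^{c}\notin\mathcal{P}$. Thus $H$ is a $\tau$-open neighbourhood of $x$ with $H^{c}\cup A^{c}\notin\mathcal{P}$, i.e.\ $A$ is primal locally in $\mathcal{P}$ at $x$, so $x\notin A^{\diamond}$. Combined with $x\notin A$ this yields $x\notin cl^{\diamond}(A)$, and the proof of $cl^{\diamond}(A)\subseteq cl^{*}(A)$ is complete.

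I do not expect a serious obstacle here; the argument is short once the setup is right. The two points that need care are: first, feeding primal-regularity the correct closed set, namely $F=cl^{\diamond}(A)$, and verifying that the hypothesis $(\{x\}\cap F)^{c}\notin\mathcal{P}$ is precisely a restatement of $x\notin\hh{A}$; and second, observing that it is harmless that $G$ is only $\tau$-open rather than $\tau^{\diamond}$-open, because the conclusion $x\notin A^{\diamond}$ is phrased entirely in terms of $\tau$-neighbourhoods and all we use about $G$ is the set inclusion $A\subseteq G$ together with the membership statement $(H\cap G)^{c}\notin\mathcal{P}$. Passing from the operator equality $cl^{\diamond}=cl^{*}$ to the topological equality $\tau^{\diamond}=\tau^{*}$ is then routine.
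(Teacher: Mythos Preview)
Your proof is correct and follows essentially the same route as the paper: both arguments reduce to showing $cl^{\diamond}(A)\subseteq cl^{*}(A)$ by contraposition, apply primal-regularity to the $\tau^{\diamond}$-closed set $F=cl^{\diamond}(A)$ (using that $\{x\}\centernot\h A$ unpacks exactly to $(\{x\}\cap F)^{c}\notin\mathcal{P}$), and then use $A\subseteq G$ together with upward closure of non-membership in $\mathcal{P}$ to pass from $(H\cap G)^{c}\notin\mathcal{P}$ to $(H\cap A)^{c}\notin\mathcal{P}$, witnessing $x\notin A^{\diamond}$. Your write-up is in fact slightly more explicit than the paper's about why $(H\cap A)^{c}\notin\mathcal{P}$ follows and why $F$ is a legitimate input to primal-regularity.
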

\begin{proof}
In order to prove the theorem, it suffices to show
$cl^{\diamond}(A)=cl^{*}(A)$ for all subsets $A$ of $X$.

Let $x\in cl^{*}(A)$. Then, $x\in A$ or $x\in \hh{A}$. If $x\in A$, then $x\in cl^{\diamond}(A)$. Now if $x\in \hh{A}$, then $\{x\}\h A$. Hence,
$(\{x\}\cap cl^{\diamond}(A))^{c}\in \p$ which means  $\{x\}\cap cl^{\diamond}(A)\neq\emptyset$. Consequently, we have $x\in cl^{\diamond}(A)$. Thus, $cl^{*}(A)\subseteq cl^{\diamond}(A)$.

Now, let $x\notin cl^{*}(A)$. Then, $x\notin A$ and $x\notin \hh{A}$. It follows that $\{x\} \centernot \h A$ and hence by Example \ref{ex2} implies  that
$(\{x\}\cap cl^{\diamond}(A))^{c}\notin \p$. Since $X$ is primal-regular space  and  $\tau^{c}\subseteq {\tau^{\diamond}}^{c}$,
there exist two open sets $H$ and $G$ such that $x\in H$ and $A\subseteq cl^{\diamond}(A)\subseteq G$ and
$(H\cap G)^{c}\notin \p$. Hence, $(H\cap A)^{c}\notin \p$ and since $x\in H\in \tau$ and $(H\cap A)^{c}\notin \p$, then $x\notin A^{\diamond}$.
So, $x\notin cl^{\diamond}(A)$. It follows that $cl^{\diamond}(A)\subseteq cl^{*}(A)$. Hence, $cl^{\diamond}(A)=cl^{*}(A)$.
\end{proof}

\begin{example} \label{ex3}
Let $(X,\tau,\mathcal{P})$ be a primal topological space and $\h$ be a binary relation on $2^{X}$
defined as $A \h B$ if and only if $(cl^{\diamond}(A)\cap cl^{\diamond}(B))^{c}\in \p$. Then, $``\h"$ is not a primal-proximity relation on $2^{X}$ but satisfies  (1)-(4)  of Definition \ref{d1}.
\end{example}

\begin{definition}
Let $(X,\tau,\mathcal{P})$ be a primal topological space. Then, $X$ is said to be a primal-normal space if for two $\tau^{\diamond}$-closed sets  $F_{1}, F_{2}$
such that $(F_{1}\cap F_{2})^{c}\notin \p$, there exist two open sets $H$ and $G$ such that $F_{1}\subseteq H$ and $F_{2}\subseteq G$ and
$(H\cap G)^{c}\notin \p$.
\end{definition}

\begin{theorem}
Let $(X,\tau,\mathcal{P})$ be a primal topological space. If $X$ is  a primal-normal space and   a binary relation defined as in  Example
\ref{ex3} and $(X,\tau)$ is $T_{1}$-space, then $\tau^{\diamond}=\tau^{*}$.
\end{theorem}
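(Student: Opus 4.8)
\emph{Plan.} The strategy is to prove that the two closure operators in play coincide, namely $cl^{\diamond}(A)=cl^{*}(A)=A\cup\hh{A}$ for every $A\subseteq X$. Since $cl^{\diamond}$ is the Kuratowski closure operator associated with $\tau^{\diamond}$ and $\tau^{*}=\{A\subseteq X\mid cl^{*}(A^{c})=A^{c}\}$, once $cl^{\diamond}=cl^{*}$ is established (this also shows, a posteriori, that $cl^{*}$ is a Kuratowski operator, even though the relation of Example \ref{ex3} is not a full primal-proximity) the two topologies must agree. The proof runs parallel to the primal-regular theorem proved earlier; the new feature is that here the relation uses $cl^{\diamond}$ on \emph{both} coordinates, which is exactly why the $T_{1}$ hypothesis is needed: together with the standard inclusion $\tau\subseteq\tau^{\diamond}$ it lets us replace $cl^{\diamond}(\{x\})$ by $\{x\}$, and in particular makes $\{x\}$ a $\tau^{\diamond}$-closed set.

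\emph{Easy inclusion $cl^{*}(A)\subseteq cl^{\diamond}(A)$.} Let $x\in A\cup\hh{A}$; we may assume $x\in\hh{A}$, i.e. $\{x\}\h A$, which by Example \ref{ex3} means $(cl^{\diamond}(\{x\})\cap cl^{\diamond}(A))^{c}\in\p$. Since $(X,\tau)$ is $T_{1}$, $\{x\}$ is closed in $\tau$, hence in $\tau^{\diamond}$, so $cl^{\diamond}(\{x\})=\{x\}$ and therefore $(\{x\}\cap cl^{\diamond}(A))^{c}\in\p$. As $X\notin\p$ we cannot have $\{x\}\cap cl^{\diamond}(A)=\emptyset$, so $x\in cl^{\diamond}(A)$.

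\emph{Hard inclusion $cl^{\diamond}(A)\subseteq cl^{*}(A)$.} I would argue contrapositively. Suppose $x\notin cl^{*}(A)$, so $x\notin A$ and $x\notin\hh{A}$; the latter gives $\{x\}\centernot\h A$, hence $(cl^{\diamond}(\{x\})\cap cl^{\diamond}(A))^{c}\notin\p$, i.e. $(\{x\}\cap cl^{\diamond}(A))^{c}\notin\p$ by $T_{1}$. Now $\{x\}$ and $cl^{\diamond}(A)$ are $\tau^{\diamond}$-closed sets whose intersection has complement not in $\p$, so primal-normality of $X$ yields open sets $H,G$ with $x\in H$, $cl^{\diamond}(A)\subseteq G$ and $(H\cap G)^{c}\notin\p$. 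From $A\subseteq cl^{\diamond}(A)\subseteq G$ we get $H\cap A\subseteq H\cap G$, hence $(H\cap G)^{c}\subseteq(H\cap A)^{c}$; since a superset of a set outside $\p$ is again outside $\p$, this forces $(H\cap A)^{c}\notin\p$. Thus $x\in H\in\tau$ with $(H\cap A)^{c}\notin\p$, which is precisely the statement $x\notin A^{\diamond}$. Combined with $x\notin A$ this gives $x\notin A\cup A^{\diamond}=cl^{\diamond}(A)$, as desired.

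Putting the two inclusions together yields $cl^{\diamond}(A)=cl^{*}(A)$ for all $A\subseteq X$, and therefore $\tau^{\diamond}=\tau^{*}$. The only genuinely delicate point is the legitimacy of invoking the primal-normality axiom in the hard inclusion: one must check that $\{x\}$ is $\tau^{\diamond}$-closed (via $T_{1}$ together with $\tau\subseteq\tau^{\diamond}$) and that the pair $\{x\},\,cl^{\diamond}(A)$ satisfies the hypothesis $(F_{1}\cap F_{2})^{c}\notin\p$ of the primal-normal definition — both of which are verified above — while the rest is the routine primal bookkeeping already used in the preceding proofs.
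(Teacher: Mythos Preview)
Your proof is correct and follows essentially the same route as the paper's own argument: both establish $cl^{\diamond}(A)=cl^{*}(A)$ for every $A\subseteq X$ by a two-inclusion argument, using $T_{1}$ together with $\tau\subseteq\tau^{\diamond}$ to reduce $cl^{\diamond}(\{x\})$ to $\{x\}$, and then invoking primal-normality on the pair $\{x\},\,cl^{\diamond}(A)$ to produce the open $H$ witnessing $x\notin A^{\diamond}$. In fact you spell out two steps the paper leaves implicit (that $\{x\}$ is $\tau^{\diamond}$-closed, and the monotonicity step $(H\cap G)^{c}\subseteq(H\cap A)^{c}$ giving $(H\cap A)^{c}\notin\p$).
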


\begin{proof}
 In order to prove the theorem, it suffices to show that
$cl^{\diamond}(A)=cl^{*}(A)$ for all subsets $A$ of $X$.

Let $x\in cl^{*}(A)$. Then, $x\in A$ or $x\in \hh{A}$. If $x\in A$, then $x\in cl^{\diamond}(A)$. Now, if $x\in \hh{A}$, then $\{x\}\h A$. Hence,
$(cl^{\diamond}(\{x\})\cap cl^{\diamond}(A))^{c}\in \p$. Since $(X,\tau)$ is $T_{1}$-space and $\tau^{c}\subseteq {\tau^{\diamond}}^{c}$, then
$[\{x\}\cap cl^{\diamond}(A)]^{c}\in \p$ and so $\{x\}\cap cl^{\diamond}(A)\neq\emptyset$. Consequently, we have $x\in cl^{\diamond}(A)$. Hence, $cl^{*}(A)\subseteq cl^{\diamond}(A)$.

Now, let $x\notin cl^{*}(A)$. Then, $x\notin A$ and $x\notin \hh{A}$. It follows that $\{x\} \centernot \h A$ and hence by Example \ref{ex3} implies  that
$(cl^{\diamond}(\{x\})\cap cl^{\diamond}(A))^{c}\notin \p$. Since $(X,\tau)$ is primal-normal space, $T_{1}$-space and  $\tau^{c}\subseteq {\tau^{\diamond}}^{c}$,
there exist two open sets $H$ and $G$ such that $\{x\}\subseteq  H$, $A\subseteq cl^{\diamond}(A)\subseteq G$ and
$(H\cap G)^{c}\notin \p$. Hence, $(H\cap A)^{c}\notin \p$. Since $x\in H\in \tau$ and $(H\cap A)^{c}\notin \p$, thus $x\notin A^{\diamond}$.
So, $x\notin cl^{\diamond}(A)$. It follows that $cl^{\diamond}(A)\subseteq cl^{*}(A)$ and hence, $cl^{\diamond}(A)=cl^{*}(A)$.
\end{proof}

\begin{theorem}
Let $(X,\h, \mathcal{P})$ be a primal-proximity space and $A \subseteq X$. Then, $A\in \hh{\tau}$ if and only if $\{x\}\centernot \h A^{c}$ for every $x\in A$.
\end{theorem}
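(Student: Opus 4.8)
The plan is to unwind the definition of the topology $\hh{\tau}$ (complements of proximity-closed sets) into a pointwise condition. Recall that $A \in \hh{\tau}$ precisely when $A^c$ is proximity-closed, i.e.\ when $\{x\} \h A^c$ implies $x \in A^c$ for every $x \in X$. Taking the contrapositive of the implication ``$\{x\}\h A^c \Rightarrow x \in A^c$'' at each point, proximity-closedness of $A^c$ is equivalent to: for every $x \notin A^c$, i.e.\ for every $x \in A$, we have $\{x\} \centernot\h A^c$. So the statement is essentially a reformulation, and the proof should be short.

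First I would prove the forward direction. Assume $A \in \hh{\tau}$, so $A^c$ is proximity-closed. Fix $x \in A$; then $x \notin A^c$. If we had $\{x\} \h A^c$, proximity-closedness of $A^c$ would force $x \in A^c$, a contradiction. Hence $\{x\} \centernot\h A^c$, as required.

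For the converse, assume $\{x\} \centernot\h A^c$ for every $x \in A$. I need to show $A^c$ is proximity-closed, i.e.\ that $\{y\} \h A^c$ implies $y \in A^c$. Argue by contraposition: suppose $y \notin A^c$, so $y \in A$; then by hypothesis $\{y\} \centernot\h A^c$. Thus whenever $\{y\} \h A^c$ we must have $y \in A^c$, which is exactly proximity-closedness of $A^c$. Therefore $A \in \hh{\tau}$.

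There is really no substantive obstacle here; the only thing to be careful about is bookkeeping the double complementation ($x \in A \iff x \notin A^c$) and correctly negating the defining implication of a proximity-closed set. One could alternatively phrase the whole argument in one line via the logical equivalence $(\forall x)(\{x\}\h A^c \Rightarrow x\in A^c) \iff (\forall x\in A)(\{x\}\centernot\h A^c)$, but I would present it as the two inclusions above for clarity. No earlier lemma (such as Lemma~\ref{1} or Theorem~\ref{t11}) is needed.
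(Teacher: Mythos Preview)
Your proposal is correct and follows essentially the same approach as the paper: both directions are handled by unwinding the definition of $\hh{\tau}$ (i.e.\ $A\in\hh{\tau}$ iff $A^c$ is proximity-closed) and taking the contrapositive of the defining implication of proximity-closedness. No additional lemmas are used in either argument.
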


\begin{proof}
 Let $A\in \hh{\tau}$ and $x\in A$. Then, $A^{c}$ is proximity-closed in $\hh{\tau}$ and  $x\notin A^{c}$. Hence, we get $\{x\} \centernot\h A^{c}$.
%If $A\in \hh{\tau}$ and $x\in A$, then $A^{c}$ is proximity-closed in $\hh{\tau}$ and  $x\notin A^{c}$ implies $\{x\} \centernot\h A^{c}$, which shows that if $x\in A$, then $\{x\}\centernot \h A^{c}$.

Conversely, if for every  $x\in A$, we have $\{x\}\centernot \h A^{c}$, then  $\{x\} \h A^{c}$ implies that $x\notin A$.
This means that $\{x\}\h A^{c}$ implies $x \in A^{c}$. Hence, $A^{c}$ is proximity-closed in $\hh{\tau}$. Thus, $A\in \hh{\tau}$.
\end{proof}

\begin{theorem}\label{th2}
Let $(X,\h, \mathcal{P})$ be a primal-proximity space and $A,B\subseteq X$ such that $A \centernot\h B$. Then the following conditions hold:
\begin{enumerate}
  \item $cl_{\hh\tau}(B)\subseteq A^{c},$ where $cl_{\hh\tau}(B)$ means the closure of $B$ with respect to $\hh{\tau}$.
  \item  if $\mathcal{P}=2^X\setminus \{X\},$ \color{black} then $B\subseteq int_{\hh\tau}(A^{c})$ where $int_{\hh\tau}(A^c)$ means the interior of $A^c$ with respect to $\hh{\tau}$.
\end{enumerate}
%Here the closure and interior are taken with respect to $\hh{\tau}$.
\end{theorem}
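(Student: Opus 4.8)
The plan is to reduce both parts to three facts already established: Lemma~\ref{2}, the symmetry axiom (1) of Definition~\ref{d1}, and the earlier theorem identifying the point-primal proximity operator with the $\hh{\tau}$-closure, i.e.\ $cl_{\hh{\tau}}(C)=\hh{C}$ for every $C\subseteq X$ (available here because, under $\mathcal{P}=2^{X}\setminus\{X\}$, $\hh{(\cdot)}$ is a genuine Kuratowski closure operator generating $\hh{\tau}$, so that the interior/closure calculus of $\hh{\tau}$ may be used freely).

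For part (1), I would begin from $A\centernot\h B$ and apply Lemma~\ref{2}, reading its ``$A$'' as our $B$ and its ``$B$'' as our $A$ (first passing to $B\centernot\h A$ via axiom (1) of Definition~\ref{d1} if one wishes to match the lemma's hypothesis verbatim); this yields $\hh{B}\subseteq A^{c}$, and since $cl_{\hh{\tau}}(B)=\hh{B}$ we obtain $cl_{\hh{\tau}}(B)\subseteq A^{c}$. For part (2), observe that $A\centernot\h B$ gives $B\centernot\h A$ by axiom (1) of Definition~\ref{d1}, so part (1) applied with the roles of $A$ and $B$ interchanged gives $cl_{\hh{\tau}}(A)\subseteq B^{c}$; then, using the standard identity $int_{\hh{\tau}}(A^{c})=X\setminus cl_{\hh{\tau}}(A)$, complementation turns $cl_{\hh{\tau}}(A)\subseteq B^{c}$ into $B=X\setminus B^{c}\subseteq X\setminus cl_{\hh{\tau}}(A)=int_{\hh{\tau}}(A^{c})$. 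This is the step that uses $\mathcal{P}=2^{X}\setminus\{X\}$: it is what makes $\hh{\tau}$ the topology of a Kuratowski closure operator, so that taking complements of closures yields interiors.

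The computations are very short --- essentially the closure/interior complementation identity and careful bookkeeping of which subset plays the ``$A$'' and which the ``$B$'' role in the asymmetrically stated Lemma~\ref{2}. The one thing I would double-check against the authors' intended argument is part (2): because it follows from part (1) by pure point-set topology, the intended route may instead pass through the explicit description of the $\hh{\tau}$-open sets (the theorem stating $A\in\hh{\tau}$ iff $\{x\}\centernot\h A^{c}$ for all $x\in A$) together with axiom (5) of Definition~\ref{d1}; I expect no serious obstacle on either route.
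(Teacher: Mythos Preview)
Your argument for part (1) coincides with the paper's: both combine Lemma~\ref{2} (after the trivial symmetry step) with the identification $cl_{\hh\tau}(B)=\hh{B}$.

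For part (2) the paper takes a different route: it fixes $x\in B$, uses the hypothesis $\mathcal{P}=2^X\setminus\{X\}$ to obtain $\{x\}\h B$, then invokes Lemma~\ref{lem2} (if $A\h\{x\}$ and $\{x\}\h B$ then $A\h B$) in contrapositive form to deduce $\{x\}\centernot\h A$, whence $x\notin cl_{\hh\tau}(A)$ and so $x\in int_{\hh\tau}(A^{c})$. Your approach---swap $A$ and $B$ via symmetry, reapply part (1), then complement---is shorter and avoids Lemma~\ref{lem2} altogether. It actually proves more than you claim: the identity $cl_{\hh\tau}(C)=\hh{C}$ and the fact that $\hh\tau$ is a topology both hold with no restriction on $\mathcal{P}$ (Theorem~\ref{th3} and the unlabelled theorem immediately after it), and the interior/closure complementation identity is valid in any topology, so your proof of (2) never uses $\mathcal{P}=2^X\setminus\{X\}$. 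Your remark that this hypothesis is needed ``so that taking complements of closures yields interiors'' is therefore a misattribution---harmless, since the argument is correct regardless---whereas the paper's element-chasing proof genuinely requires the hypothesis to get $\{x\}\h B$ from $x\in B$.
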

\begin{proof}
(1) Since the closure is taken with respect to $\hh{\tau}$ and $A \centernot\h B$, we have $\hh{B}=cl_{\hh\tau}(B)\subseteq A^c$.

(2) If $x\in B$, then $\{x\}\h B$. This implies that $\{x\}\centernot \h A$. Because if $\{x\}\h A$, then $A\h B$ by Lemma \ref{lem2}. Hence, $x\notin cl_{\hh{\tau}}(A)$ which means $x\in (cl_{\hh{\tau}}(A))^{c}=int_{\hh\tau}(A^{c})$. Hence, we have $B\subseteq int_{\hh\tau}(A^{c})$. %\color{red} I will revise this proof by using hypothesis.
\end{proof}

\begin{theorem}
Let $(X,\h, \mathcal{P})$ be a primal-proximity space and $A, B \subseteq X$. Then,
 $A \h B$ if and only if  $cl_{\hh{\tau}} (A) \h cl_{\hh{\tau}} (B),$
where $cl_{\hh{\tau}} (A)$ means the closure of $A$ with respect to $\hh{\tau}$.
\end{theorem}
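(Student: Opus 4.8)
The plan is to reduce everything to the point-primal proximity operator. By the preceding theorem, $cl_{\hh{\tau}}(A)=\hh{A}$ and $cl_{\hh{\tau}}(B)=\hh{B}$ for all $A,B\subseteq X$, so the assertion is equivalent to $A\h B$ if and only if $\hh{A}\h\hh{B}$. I would prove the two implications separately, both by short appeals to results already in hand.

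For the direction $A\h B\Rightarrow cl_{\hh{\tau}}(A)\h cl_{\hh{\tau}}(B)$, I would use that a set is always contained in its closure, so $A\subseteq cl_{\hh{\tau}}(A)=\hh{A}$ and $B\subseteq cl_{\hh{\tau}}(B)=\hh{B}$. Then, from $A\h B$, Lemma~\ref{1} applied with $C:=\hh{A}$ and $D:=\hh{B}$ immediately gives $\hh{A}\h\hh{B}$, that is, $cl_{\hh{\tau}}(A)\h cl_{\hh{\tau}}(B)$. This half needs nothing beyond the monotonicity Lemma~\ref{1} and the elementary closure inclusion.

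For the converse I would argue contrapositively. Assume $A\centernot\h B$. By condition~(1) of Definition~\ref{d1} this is the same as $B\centernot\h A$, so the corollary following Theorem~\ref{th1} yields $\hh{B}\centernot\h\hh{A}$, and one further use of condition~(1) gives $\hh{A}\centernot\h\hh{B}$, i.e.\ $cl_{\hh{\tau}}(A)\centernot\h cl_{\hh{\tau}}(B)$. (Alternatively, one can apply Theorem~\ref{th1} twice, together with symmetry, to pass from $A\centernot\h B$ to $A\centernot\h\hh{B}$ and then to $\hh{A}\centernot\h\hh{B}$.) Combining the two implications completes the argument.

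I do not expect a genuine obstacle: the proof is essentially bookkeeping with the symmetry axiom, Lemma~\ref{1}, and Theorem~\ref{th1}. The only point to watch is that the forward direction genuinely uses $A\subseteq\hh{A}$; this is legitimate here because $\hh{A}$ has been identified with the closure $cl_{\hh{\tau}}(A)$, which contains $A$.
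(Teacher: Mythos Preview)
Your forward implication coincides with the paper's: both invoke $A\subseteq cl_{\hh{\tau}}(A)$, $B\subseteq cl_{\hh{\tau}}(B)$ and Lemma~\ref{1}. For the converse you take a genuinely different route. The paper does not pass through the identification $cl_{\hh{\tau}}(\cdot)=\hh{(\cdot)}$ at all; instead, starting from $A\centernot\h B$, it applies axiom~(5) (in the form of Theorem~\ref{t11}) to obtain a set $E$ with $A\centernot\h E$ and $E^{c}\centernot\h B$, uses Theorem~\ref{th2}(1) to deduce $cl_{\hh{\tau}}(B)\subseteq E$, and concludes $A\centernot\h cl_{\hh{\tau}}(B)$ by monotonicity; a second pass on the other variable then yields $cl_{\hh{\tau}}(A)\centernot\h cl_{\hh{\tau}}(B)$. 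You instead compress all of this into a single citation of the corollary to Theorem~\ref{th1} (namely $B\centernot\h A\Rightarrow\hh{B}\centernot\h\hh{A}$), together with the preceding theorem identifying $cl_{\hh{\tau}}$ with $\hh{(\cdot)}$. Your argument is shorter and correct once those earlier results are available; the paper's version has the mild advantage of working directly with $cl_{\hh{\tau}}$ and not depending on the identification theorem.
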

\begin{proof}
If $A\h B$, then by Lemma \ref{1}, $cl_{\hh{\tau}} (A)\h cl_{\hh{\tau}} (B)$ since $A\subseteq cl_{\hh{\tau}} (A)$ and $B\subseteq cl_{\hh{\tau}} (B)$.

If $A\centernot\h B$, then  there exists a subset $E$ of $X$ such that $A\centernot\h E$ and $E^{c}\centernot\h B$ and $(E\cap E^{c})^{c}\notin \p$.
Hence, $cl_{\hh{\tau}} (B)\subseteq E$ by (1) of Theorem \ref{th2}. This implies that $A\centernot\h cl_{\hh{\tau}}(B).$ Because if $A\h cl_{\hh{\tau}} (B)$ then by Lemma \ref{1}, then
$A\h E$ since $cl_{\hh{\tau}} (B)\subseteq E$. Now, if $A\centernot\h B$ then  $A\centernot\h cl_{\hh{\tau}}(B)$. Also, $cl_{\hh{\tau}}(B) \centernot\h A$ by similar  prove  again it follows that $cl_{\hh{\tau}} (B) \centernot\h cl_{\hh{\tau}} (A)$.
Hence, $A \h B$ if and only if  $cl_{\hh{\tau}} (A) \h cl_{\hh{\tau}} (B)$.

\section{Conclusion}

In  this paper, we introduced a new type of proximity space  called primal-proximity space. Later, we defined point-primal proximity operator and investigated some of its fundamental properties. We also proved that this operator is a Kuratowski closure operator under special condition. Moreover, one more operator via point-primal proximity operator was defined.  Furthermore, we gave not only some relationships but also several examples.

%% Above proof is OK.
%If $A\centernot\h B$, then  there exists a subset $E$ of $X$ such that $A\centernot\h E$ and $E^{c}\centernot\h B$ and $(E\cap E^{c})^{c}\notin \p$. Hence, $cl_{\hh{\tau}} (B)\subseteq E$ by (1) of Theorem \ref{th2}. This implies that $A\centernot\h \overline{B}$ for if $A\h cl_{\hh{\tau}} (B)$ then by Lemma \ref{1}, then $A\h E$ since $cl_{\hh{\tau}} (B)\subseteq E$. By applying Lemma \ref{1} again it follows that $cl_{\hh{\tau}} (A) \centernot\h cl_{\hh{\tau}} (B)$. Hence, $A \h B$ if and only if  $cl_{\hh{\tau}} (A) \h cl_{\hh{\tau}} (B)$. \color{red} According to me, there is a problem in the proof. I will think about it again.
\end{proof}

%%%%%%%%%%%%%%%%%%%%%%

%%%%%%%%%%%%%%%%%%%%%

%%%%%%%%%%%%%%%%%%%%%%%%%%%%%%%%%%%%%%%%%%%%%%%%%%%%%%%%%%%%%%%%%%%%%%%%%%%%%%%%%%%%%%%%%%%%%%
%%%%%%%%%%%%%%%%%%%%%%%%%%%%%%%%%
%%%%%%%%%%%%%%%%%%%%%%%%
%%%%%%%%%%%%%%a%%%%%%%%%%%%%%%%%%%%%%%%%%%%%%%%%%%%%%%%%%%%%%%%%%%%%%%%%%

{\bf Conflict of interest:} The authors declare that there is no conflict of interest.

\end{document}